\newtheorem{theorem}{Theorem}[section]
\newtheorem{lemma}[theorem]{Lemma}
\theoremstyle{definition}
\newtheorem{definition}[theorem]{Definition}
\newtheorem{example}[theorem]{Example}
\theoremstyle{remark}
\newtheorem{remark}[theorem]{Remark}
\numberwithin{equation}{section}
\begin{document}

\title[Warped product skew semi-invariant submanifolds]{Warped product skew semi-invariant\\
submanifolds of order $1$ of a locally\\
product Riemannnian manifold}

\author{Hakan Mete Ta\c stan}

\address{\.Istanbul University\\
Department of Mathematics\\
Vezneciler, \.Istanbul, Turkey}

\email{hakmete@istanbul.edu.tr}

\subjclass[2000]{Primary 53B25; Secondary 53C55.}

\keywords{locally product manifold, warped product submanifold, skew semi-invariant submanifold, invariant distribution, slant distribution.}
\begin{abstract}
We introduce warped product skew semi-invariant submanifolds of
order $1$ of a locally product Riemannian manifold. We give a
necessary and sufficient condition for skew semi-invariant
submanifold of order 1 to be a locally warped product. We also prove
that the invariant distribution which is involved in the definition
of the submanifold is integrable under some restrictions. Moreover,
we find an inequality between the warping function and the squared
norm of the second fundamental form for such submanifolds. Equality
case is also discussed.
\end{abstract}
\maketitle
\section{introduction}
The theory of submanifolds is one of the most popular research
area in differential geometry. In an almost Hermitian manifold, its almost
complex structure determines several types  of submanifolds.
For example, holomorphic (invariant) submanifolds and totally real (anti-invariant)
submanifolds are determined by the behavior of the almost complex structure.
In the first case the tangent space of the submanifolds is invariant under
the action of the almost complex structure. In the second case the tangent space of
the submanifolds is anti-invariant, that is , it is mapped into the normal space.
A. Bejancu \cite{Be} introduced the notion of CR-submanifolds of a K\"{a}hlerian
manifold as a natural generalization of invariant and anti-invariant submanifolds.
A CR-submanifold is said to be proper if it is neither invariant nor  anti-invariant. The theory of CR-submanifolds has
been a most interesting topics since then. Slant submanifolds are another generalization
of invariant and anti-invariant submanifolds. This type submanifolds is defined by
B.Y. Chen \cite{Che}. Since then such submanifolds have been studied by many geometers
(see \cite{Ar,Ca,Lo} and references therein). If a slant submanifold is neither invariant
nor anti-invariant then it is said to be proper. We observe that a proper CR-submanifold
is never a slant submanifold. In \cite{Papa}, N. Papaghiuc introduced the notion
of semi-slant submanifolds obtaining CR-submanifolds and  slant submanifolds as special cases.
A. Carriazo \cite{Ca}, introduced bi-slant submanifolds which is a generalization of
semi-slant submanifolds. One of the classes of such submanifolds is that of anti-slant
submanifolds. This type submanifolds are also generalization of slant and CR-submanifolds.
However, B. \c Sahin  \cite{Sa} called these submanifolds as hemi-slant submanifolds
because of that the name anti-slant seems to refer that it has no slant factor.
He also observed that there is no inclusion between proper hemi-slant submanifolds
and proper semi-slant submanifolds. We note that hemi-slant submanifolds
are also studied under the name of pseudo-slant submanifolds (see \cite{Kh,Ud}).\\

Skew CR-submanifolds of a K\"{a}hlerian manifold are first defined by G.S. Ronsse in \cite{Ro}.
Such submanifolds are a generalizations of bi-slant submanifolds. Consequently, invariant, anti-invariant,
CR, slant, semi-slant and hemi-slant submanifolds are particular cases of skew CR-submanifolds. We notice that
CR-submanifolds in K\"{a}hlerian manifolds correspond to semi-invariant submanifolds \cite{Bej}
in locally product Riemannian manifolds. Therefore, skew CR-submanifolds in K\"{a}hlerian manifolds
correspond to skew semi-invariant submanifolds in locally product Riemannian manifolds.
For the fundamental properties and further studies of skew CR-submanifolds; see \cite{Ro} and \cite{Trip}.
Skew semi-invariant submanifolds of a locally product Riemannian manifold were studied first
by X. Liu and F.-M. Shao in \cite{Li}.\\

The notion of warped product was initiated by R.L. Bishop and B.
O'Neill \cite{Bi}. Let $M_{1}$ and $M_{2}$ be two Riemanian
manifolds with Riemannian metrics $g_{1}$ and $g_{2}$ respectively.
Let $f$ be positive differentiable function on $M_{1}$. The warped
product $M=M_{1}\times _{f}M_{2}$ of $M_{1}$ and $M_{2}$ is the
Riemannian manifold $(M_{1}\times M_{2},g)$, where
\begin{equation}\label{}\nonumber
\begin{array}{c}
g=g_{1}+f^{2}g_{2}
\end{array}.
\end{equation}
More explicitly, if $U\in T_{p}M$, then
\begin{equation}\label{}\nonumber
\begin{array}{c}
\|U\|^{2}=\|d\pi_{1}(U)\|^{2}+(f^{2}\circ\pi_{1})\|d\pi_{2}(U)\|^{2}
\end{array},
\end{equation}
where $\pi_{i}, i=1,2,$ are the canonical projections $M_{1}\times
M_{2}$ onto $M_{1}$ and $M_{2}$ respectively. The function $f$ is
called the\emph{ warping function} of the warped product. If the
warping function is constant, then the manifold $M$ is said to be
\emph{trivial}. It is also known that $M_{1}$ is totally geodesic
and $M_{2}$ is totally umbilical from \cite{Bi}. For a warped
product $M_{1}\times _{f}M_{2}$, we denote by $\mathcal{D}_{1}$ and
$\mathcal{D}_{2}$ the distributions given by the vectors tangent to
leaves and fibers respectively. Thus, $\mathcal{D}_{1}$ is obtained
from tangent vectors to $M_{1}$ via horizontal lift and
$\mathcal{D}_{2}$  is obtained by tangent vectors of $M_{2}$ via
vertical lift. Let $U$ be a vector field on $M_{1}$ and $V$ be vector
field on $M_{2}$, then from Lemma 7.3 of \cite{Bi}, we have
\begin{equation}\label{e3}
\begin{array}{c}
\nabla_{U}V=\nabla_{V}U=U(\ln f)V
\end{array},
\end{equation}
where $\nabla$ is the Levi-Civita connection on $M_{1}\times
_{f}M_{2}$.\\

Warped product submanifolds have been studying very actively since
B.Y. Chen \cite{Chenn} introduced the notion of CR-warped product in
K\"{a}hlerian manifolds. In fact, different type warped product
submanifolds of different kinds structures are studied last thirteen
years. For example; see \cite{Al,Ha,S2,S3,Sa,Sah,Ud}. Most of the
studies related to this topic can be found in the survey book
\cite{Chen2}. Recently, B. \c Sahin \cite{Sah} introduced the notion
of skew CR-warped product submanifolds of K\"{a}hlerian manifolds
which is a generalization of different kind warped product
submanifolds studied by many authors. We note that warped product
skew CR-submanifolds of a cosymplectic manifold were
studied in \cite{Ha}.\\

In this paper, we define and study warped product skew
semi-invariant submanifolds of order $1$ of a locally product
Riemannian manifold. We give an illustrate example and prove a
characterization theorem for the mixed totally geodesic proper skew
semi-invariant submanifold using some lemmas. In general, the
invariant distribution of a submanifold is not integrable in a
locally product Riemannian manifold. However, we prove that the
invariant distribution of a warped product skew semi-invariant
submanifold of order $1$ is integrable in a locally product
Riemannian manifold under some restrictions. Finally, we obtain an
inequality between the warping function and the squared norm of the
second fundamental form for such submanifolds. Equality case is also
considered.

\section{preliminaries}
Let $(\bar{M},g,F)$ be a locally product Riemannian manifold or, (briefly, l.p.R. manifold).
It means that \cite{Yan} $\bar{M}$ has a tensor field $F$ of type $(1,1)$ on $\bar{M}$
such that, $\forall  \bar{U}, \bar{V}\in T\bar{M}$, we have
\begin{equation}
\label{e4}
\begin{array}{c}
F^{2}=I, (F\neq\pm I), \quad g(F\bar{U},F\bar{V})=g(\bar{U},\bar{V})\quad and  \quad(\overline{\nabla}_{\bar{U}}\,\,F)\bar{V}=0
\end{array},
\end{equation}
where $g$ is the Riemannian metric, $\overline{\nabla}$ is the Levi-Civita connection on $\bar{M}$
and $I$ is the identifying operator on the tangent bundle $T\bar{M}$ of $\bar{M}$.\\

Let $M$ be a submanifold of a l.p.R. manifold $(\bar{M},g,F)$ as an isometrically immersed.
Let  ${\nabla}$ and $\nabla^{\bot}$ be the induced, and induced normal connection in $M$ and the
normal bundle $T^{\bot}M$ of $M$, respectively. Then for all  $U,V\in TM$ and $\xi\in T^{\bot}M$
the Gauss and Weingarten formulas are given by
\begin{equation}
\label{e5}
\begin{array}{c}
\overline{\nabla}_{U}V={\nabla}_{U}V+h(U,V)
\end{array}
\end{equation}
and
\begin{equation}
\label{e6}
\begin{array}{c}
\overline\nabla_{U}\xi=-A_{\xi}U+\nabla_{U}^{\bot}\xi
\end{array}
\end{equation}
where $h$ is the \emph{second fundamental form} of $M$ and $A_{\xi}$ is the Weingarten endomorphism
associated with $\xi.$ The second fundamental form $h$ and the \emph{shape operator} $A$ related by

\begin{equation}
\label{e7}
\begin{array}{c}
g(h(U,V),\xi)=g(A_{\xi}U,V)
\end{array}.
\end{equation}
The \emph{mean curvature vector field} $H$ is given by
$H=\frac{1}{m}(trace\,h),$ where $dim(M)=m.$ The submanifold $M$ is
called \emph{totally geodesic} in $\bar{M}$ if $h=0$ and
\emph{minimal} if $H=0.$ If $h(U,V)=g(U,V)H$ for all $U, V\in TM$,
then $M$ is \emph{totally umbilical}.

\section{skew semi-invariant submanifolds of order $1$\\
of a locally product Riemannian manifold}
Let $\bar{M}$ be a l.p.R. manifold with Riemannian metric $g$ and almost product structure $F.$
Let $M$ be Riemannian submanifold isometrically immersed in  $\bar{M}$. For any $U\in TM$, we write
\begin{equation} \label{e9}
\begin{array}{c}
FU=TU+NU
\end{array}.
\end{equation}
Here $TU$ is the tangential part of $FU,$ and
$NU$ is the normal part of $FU.$ Similarly, for any $\xi\in
T^{\bot}M$, we put
\begin{equation}
\label{e10}
\begin{array}{c}
F\xi=t\xi+\omega\xi
\end{array},
\end{equation}
where $t\xi$ is the tangential part of $F\xi,$ and $\omega\xi$ is
the normal part of $F\xi.$\\

Using (\ref{e4}) and (\ref{e9}), we have $g(T^{2}U,V)=g(T^{2}V,U)$
for all $U, V\in TM$. It says that $T^{2}$ is a symmetric operator
on the tangent space $T_{p}M, p\in M$. Therefore its eigenvalues are
real and diagonalizable. Moreover, its eigenvalues are bounded by
$0$ and $1.$ For each $p\in M$, we set
$$\mathcal{D}_{p}^{\lambda}=Ker\{T^{2}-\lambda^{2}(p)I\}_{p}\,,$$
where $I$ is the identity endomorphism and $\lambda(p)$ belongs to
closed interval $[0,1]$ such that $\lambda^{2}(p)$ is an eigenvalue
of $T_{p}^{2}$. Since $T_{p}^{2}$ is symmetric and diagonalizable,
there is some integer $k$ such that
$\lambda_{1}^{2}(p),...,\lambda_{k}^{2}(p)$ are distinct eigenvalues
of $T_{p}^{2}$ and $T_{p}M$ can be decomposed as a direct sum of
mutually orthogonal eigenspaces, i.e.
$$T_{p}M=\mathcal{D}_{p}^{\lambda_{1}}\oplus...\oplus\mathcal{D}_{p}^{\lambda_{k}}.$$
For $i\in\{1,...,k\}$, $\mathcal{D}_{p}^{\lambda_{i}}$ is a
$T$-invariant subspace of $T_{p}M$. We note that
$\mathcal{D}_{p}^{0}=Ker T_{p}$ and $\mathcal{D}_{p}^{1}=Ker N_{p}.$
$\mathcal{D}_{p}^{0}$ is the maximal anti $F$-invariant subspace of
$T_{p}M$ where as $\mathcal{D}_{p}^{1}$ is the maximal $F$-invariant
subspace of $T_{p}M$. We denote the distributions $\mathcal{D}^{0}$
and $\mathcal{D}^{1}$ by $\mathcal{D}^\bot$ and  $\mathcal{D}^T$,
respectively from now on.

\begin{definition}  (\cite{Li})
Let $M$ be a submanifold of a l.p.R. manifold $\bar{M}$. Then $M$ is
said to be a \emph{generic submanifold} if there exists an integer
$k$ and functions $\lambda_{i}, i\in\{1,...,k\}$ defined on $M$ with
values in $(0,1)$ such that\\

\textbf{(i)} Each $\lambda_{i}^{2}(p), i\in\{1,...,k\}$ is a
distinct eigenvalue of $T_{p}^{2}$ with
$$T_{p}M=\mathcal{D}_{p}^\bot\oplus\mathcal{D}_{p}^T\oplus\mathcal{D}_{p}^{\lambda_{1}}\oplus...\oplus\mathcal{D}_{p}^{\lambda_{k}}$$
for $p\!\in\!M$.\\

\textbf{(ii)} The dimension of $\mathcal{D}^\bot$, $\mathcal{D}^T$
and $\mathcal{D}^{\lambda_{i}}, 1\leq i\leq k$ are independent of
$p\!\in\!\!M$. Moreover, if each $\lambda_{i}$ is constant on $M$,
then we say that $M$ is a \emph{skew semi-invariant submanifold} of
$\bar{M}$.
\end{definition}

In view of Definition 3.1, we observe that the following special
cases.\\

Let $M$ be a skew semi-invariant submanifold of a l.p.R. manifold
$\bar{M}$ as in Definition 3.1. Then\\

\textbf{(a)} If $k\!=\!0$ and $\mathcal{D}^\bot\!=\!\{0\}$, then $M$
is an invariant submanifold \cite{A}.\\

\textbf{(b)} If $\!k=0\!$ and $\mathcal{D}^T\!=\!\{0\}$, then $M$ is
an
anti-invariant submanifold \cite{A}.\\

\textbf{(c)} If $k\!=\!0$, then $M$ is a semi-invariant submanifold \cite{Bej}.\\

\textbf{(d)} If $\mathcal{D}^\bot\!=\!\{0\}\!=\!\mathcal{D}^T$ and
$k\!=\!1$,
then $M$ is a slant submanifold \cite{S}.\\

\textbf{(e)} If $\mathcal{D}^\bot\!=\!\{0\},
\mathcal{D}^T\!\neq\!\{0\}$ and
$k\!=\!1$, then $M$ is a semi-slant submanifold \cite{S}.\\

\textbf{(f)} If $\mathcal{D}^T\!=\!\{0\},
\mathcal{D}^\bot\!\neq\!\{0\}$ and
$k\!=\!1$, then $M$ is a hemi-slant submanifold \cite{Ta}.\\

\textbf{(g)} If $\mathcal{D}^\bot\!\!=\!\{0\}\!=\!\mathcal{D}^T$
and $k\!=\!2$, then $M$ is a bi-slant submanifold \cite{Ca}.
\begin{definition}
A submanifold $M$ of a l.p.R. manifold $\bar{M}$ is called a
\emph{skew semi-invariant submanifold of order $1$}, if $M$ is a
skew semi-invariant submanifold with $k\!=\!1$.
\end{definition}
In this case, we have
\begin{equation}\label{e11}
\begin{array}{c}
TM=\mathcal{D}^\bot\oplus\mathcal{D}^T\oplus\mathcal{D}^\theta
\end{array},
\end{equation}
where $\mathcal{D}^\theta=\mathcal{D}^{\lambda_{1}}$ and $\lambda_{1}$ is constant.
We say that a skew semi-invariant submanifold of order $1$ is\emph{ proper},
if $\mathcal{D}^\bot\!\neq\!\{0\}$ and $\mathcal{D}^T\!\neq\!\{0\}$.\\

A slant submanifold $M$ of a l.p.R. manifold $\bar{M}$ is
characterized by
\begin{equation}\label{e12}
\begin{array}{c}
T^{2}U=\lambda U
\end{array}
\end{equation}
such that $\lambda\in[0,1]$, where $U\in TM$, for details;
see\cite{S}. Moreover, if $\theta$ is the slant angle of $M$, then
we have $\lambda=\cos^{2}\!\theta.$ On the other hand, for any slant
submanifold $M$ of a l.p.R. manifold $\bar{M}$, we have
\begin{eqnarray}\label{e13}
&&(a)\quad T^{2}+tN=I, \quad\quad (b)\quad \omega^{2}+Nt=I,\nonumber\\
&&(c)\quad NT+\omega N=0,~~~~\quad(d)\quad Tt+t\omega=0.
\end{eqnarray}
For the proof of (\ref{e13}); see \cite{Ta}.\\

Throughout this paper, the letters $V,W$ will denote the vector fields of
the anti-invariant distribution $\mathcal{D}^\bot$, $U,Z$ will denote the vector fields of
the slant distribution $\mathcal{D}^\theta$ and $X,Y$ will denote the vector fields of
the invariant distribution $\mathcal{D}^T$.\\

For the further study of skew semi-invariant submanifold of order $1$ of a l.p.R. manifold,
we need to following lemmas.

\begin{lemma} Let $M$ be a proper skew semi-invariant submanifold of order $1$ of a l.p.R. manifold $\bar{M}.$
Then we have,
\begin{equation}\label{e14}
\begin{array}{c}
g(\nabla_{V}W,X)=-g(A_{FW}V,FX)
\end{array},
\end{equation}
\begin{equation}\label{e15}
\begin{array}{c}
g(\nabla_{V}Z,X)=-\csc^{2}\!\theta \{g(A_{NTZ}V,X)+g(A_{NZ}V,FX)\}
\end{array},
\end{equation}
\begin{equation}\label{e16}
\begin{array}{c}
g(\nabla_{Z}V,X)=-g(A_{FV}Z,FX)
\end{array},
\end{equation}
for $V,W\in\mathcal{D}^\bot, Z\in\mathcal{D}^\theta$ and $X\in\mathcal{D}^T$.
\end{lemma}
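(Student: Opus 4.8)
The plan is to derive all three identities by one device: transport the almost product structure $F$ across the ambient Levi-Civita connection $\overline{\nabla}$ using $\overline{\nabla}F=0$ and the isometry property $g(F\bar U,F\bar V)=g(\bar U,\bar V)$ from (\ref{e4}), and then split ambient covariant derivatives into tangential and normal parts via the Gauss and Weingarten formulas (\ref{e5})--(\ref{e6}). Three structural facts will be used throughout: for $V\in\mathcal{D}^{\bot}$ one has $TV=0$, hence $FV=NV\in T^{\bot}M$; for $X\in\mathcal{D}^{T}$ one has $NX=0$, hence $FX=TX\in\mathcal{D}^{T}\subset TM$; and $\mathcal{D}^{\theta}$ is $T$-invariant with $T^{2}Z=\cos^{2}\!\theta\,Z$ for $Z\in\mathcal{D}^{\theta}$, where $\theta$ is the (constant) slant angle of $\mathcal{D}^{\theta}$. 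Combined with (\ref{e13})(a), this last fact gives $tNZ=(1-\cos^{2}\!\theta)Z=\sin^{2}\!\theta\,Z$, which is the source of the factor $\csc^{2}\!\theta$ in (\ref{e15}).

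I would first dispatch (\ref{e14}) and (\ref{e16}), which are easy because only one application of $F$ is needed. For (\ref{e14}): since $X$ is tangent, $g(\nabla_{V}W,X)=g(\overline{\nabla}_{V}W,X)=g(\overline{\nabla}_{V}FW,FX)$; as $FW=NW$ is normal, Weingarten (\ref{e6}) gives $\overline{\nabla}_{V}(NW)=-A_{NW}V+\nabla^{\bot}_{V}(NW)$, and pairing against the tangent vector $FX$ kills the normal term, leaving $g(\nabla_{V}W,X)=-g(A_{FW}V,FX)$. Identity (\ref{e16}) follows by the same three lines with $V$ and $Z$ interchanged in the first slot, since again $FV=NV$ is normal: $g(\nabla_{Z}V,X)=g(\overline{\nabla}_{Z}FV,FX)=-g(A_{FV}Z,FX)$.

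The substantive case is (\ref{e15}), where $FZ=TZ+NZ$ has both a tangential and a normal component, so one pass of $F$ is not enough and I would bootstrap. Write $g(\nabla_{V}Z,X)=g(\overline{\nabla}_{V}FZ,FX)=g(\overline{\nabla}_{V}TZ,FX)+g(\overline{\nabla}_{V}NZ,FX)$; the last summand equals $-g(A_{NZ}V,FX)$ by Weingarten, exactly as above. For $g(\overline{\nabla}_{V}TZ,FX)$ I apply $F$ a second time: using $F^{2}=I$ and $\overline{\nabla}F=0$ it equals $g(\overline{\nabla}_{V}(FTZ),X)$, and $FTZ=T^{2}Z+NTZ=\cos^{2}\!\theta\,Z+NTZ$; since $\cos^{2}\!\theta$ is constant and the normal connection part of $\overline{\nabla}_{V}(NTZ)$ dies against $X$, this equals $\cos^{2}\!\theta\,g(\nabla_{V}Z,X)-g(A_{NTZ}V,X)$. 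Substituting back gives $g(\nabla_{V}Z,X)=\cos^{2}\!\theta\,g(\nabla_{V}Z,X)-g(A_{NTZ}V,X)-g(A_{NZ}V,FX)$, and solving --- legitimate since $\cos^{2}\!\theta<1$ because $\mathcal{D}^{\theta}$ is a proper slant distribution --- yields (\ref{e15}) after dividing by $\sin^{2}\!\theta$. The only point demanding attention is the bookkeeping: tracking which inner products vanish (normal paired with tangent) and recognizing that the recursion closes because $g(\nabla_{V}Z,X)$ reappears with coefficient $\cos^{2}\!\theta\ne1$; no deeper obstacle arises.
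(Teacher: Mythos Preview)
Your proof is correct and follows essentially the same route as the paper: pass to $\overline{\nabla}$ via Gauss, move $F$ across using $\overline{\nabla}F=0$ and the compatibility of $g$ with $F$, then peel off tangential and normal parts with Gauss--Weingarten; for (\ref{e15}) you iterate once more using $FTZ=T^{2}Z+NTZ$ together with $T^{2}Z=\cos^{2}\!\theta\,Z$ to obtain a recurrence that you solve for $g(\nabla_{V}Z,X)$, exactly as the paper does. Your write-up is in fact more explicit about why the recursion closes ($\cos^{2}\!\theta\neq1$ since $\mathcal{D}^{\theta}$ is proper) than the paper's own proof.
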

\begin{proof} Using (\ref{e5}) and (\ref{e4}), we have $g(\nabla_{V}W,X)=g(\overline{\nabla}_{V}FW,FX)$
for $V,W\in\mathcal{D}^\bot$ and $X\in\mathcal{D}^T$. Hence, using
(\ref{e6}), we get (\ref{e14}). In a similar way, we have
$g(\nabla_{V}Z,X)=g(\overline{\nabla}_{V}FZ,FX)$, where
$Z\in\mathcal{D}^\theta$. Then using (\ref{e9}) and (\ref{e4}), we
obtain
$g(\nabla_{V}Z,X)=g(\overline{\nabla}_{V}FTZ,X)+g(\overline{\nabla}_{V}NZ,FX)$.
Hence, using (\ref{e9}) and (\ref{e6}), we arrive at
$g(\nabla_{V}Z,X)=g(\overline{\nabla}_{V}T^{2}Z,X)+g(\overline{\nabla}_{V}N(TZ),FX)-g(A_{NZ}V,FX)$.
With the help of (\ref{e12}), (\ref{e5}) and (\ref{e6}), we get
(\ref{e15}). Similarly, one can obtain (\ref{e16}).
\end{proof}

\begin{lemma} Let $M$ be a proper skew semi-invariant submanifold of order $1$ of a l.p.R. manifold $\bar{M}.$
Then we have,
\begin{equation}\label{e17}
\begin{array}{c}
g(\nabla_{U}Z,X)=-\csc^{2}\!\theta \{g(A_{NTZ}U,X)+g(A_{NZ}U,FX)\}
\end{array},
\end{equation}
\begin{equation}\label{e18}
\begin{array}{c}
g(\nabla_{X}Y,Z)=\csc^{2}\!\theta \{g(A_{NTZ}X,Y)+g(A_{NZ}X,FY)\}
\end{array},
\end{equation}
\begin{equation}\label{e19}
\begin{array}{c}
g(\nabla_{X}Y,V)=g(A_{FV}X,FY)
\end{array},
\end{equation}
for $X,Y\in\mathcal{D}^T$, $U,Z\in\mathcal{D}^\theta$  and $V\in\mathcal{D}^\bot$.
\end{lemma}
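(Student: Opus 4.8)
The plan is to run, for each of the three identities, the same computation that established (\ref{e14})--(\ref{e16}) in the previous lemma. For a tangential inner product $g(\nabla_{A}B,C)$ one passes to $\overline{\nabla}$ by the Gauss formula (\ref{e5}), carries $F$ through using that $\overline{\nabla}$ is metric and parallelizes $F$ (so $g(F\cdot,F\cdot)=g(\cdot,\cdot)$ and $\overline{\nabla}(FW)=F\overline{\nabla}W$, from (\ref{e4})), splits off tangential and normal parts with (\ref{e9}), converts every term in which $\overline{\nabla}_{\bullet}$ of a normal vector is paired with a tangent vector into a shape operator via the Weingarten formula (\ref{e6}) together with (\ref{e7}), and finally reproduces the original inner product on the right by invoking $T^{2}Z=\cos^{2}\!\theta\,Z$ from (\ref{e12}); moving that term across and dividing by $1-\cos^{2}\!\theta=\sin^{2}\!\theta$ produces the $\csc^{2}\!\theta$ factor. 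The two structural facts used throughout are that $\mathcal{D}^{T}$ is $F$-invariant (so $FX=TX\in\mathcal{D}^{T}$ and $NX=0$ for $X\in\mathcal{D}^{T}$) and that $\mathcal{D}^{\bot}=Ker\,T$ (so $FV=NV$ is normal for $V\in\mathcal{D}^{\bot}$). For (\ref{e17}) there is nothing to do beyond observing that the argument establishing (\ref{e15}) never used that the first argument lies in $\mathcal{D}^{\bot}$, only that the second lies in $\mathcal{D}^{\theta}$ and the third in $\mathcal{D}^{T}$; hence the identical chain of equalities with $V$ replaced by $U\in\mathcal{D}^{\theta}$ gives (\ref{e17}).

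For (\ref{e18}) I would begin with $g(\nabla_{X}Y,Z)=g(\overline{\nabla}_{X}Y,Z)=g(\overline{\nabla}_{X}(FY),FZ)$ and split $FZ=TZ+NZ$. The $NZ$-contribution is $g(\overline{\nabla}_{X}(FY),NZ)=g(A_{NZ}X,FY)$ by (\ref{e6})--(\ref{e7}), using $FY\in TM$. For the $TZ$-contribution, apply $F$ once more and use $F^{2}=I$ to strip it from $FY$: $g(\overline{\nabla}_{X}(FY),TZ)=g(\overline{\nabla}_{X}Y,FTZ)=g(\overline{\nabla}_{X}Y,T^{2}Z)+g(\overline{\nabla}_{X}Y,NTZ)=\cos^{2}\!\theta\,g(\nabla_{X}Y,Z)+g(A_{NTZ}X,Y)$. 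Adding the two contributions and solving for $g(\nabla_{X}Y,Z)$ yields (\ref{e18}); here the sign comes out $+\csc^{2}\!\theta$, not $-\csc^{2}\!\theta$, because $F$ lands on $FY$ and $F^{2}=I$ removes it with no sign change, in contrast with the derivations of (\ref{e15}) and (\ref{e17}).

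For (\ref{e19}) no slant input is needed: writing $g(\nabla_{X}Y,V)=g(\overline{\nabla}_{X}Y,V)=g(\overline{\nabla}_{X}(FY),FV)$, and noting that $FV=NV$ is normal while $FY$ is tangent, the tangential part of $\overline{\nabla}_{X}(FY)$ pairs to zero and what remains is $g(h(X,FY),FV)=g(A_{FV}X,FY)$ by (\ref{e7}).

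The computations themselves are routine; the point that needs attention is the bookkeeping — making sure the inner product recycled on the right is \emph{exactly} the one on the left so that $\sin^{2}\!\theta$ may legitimately be divided out, which is valid because $M$ is a \emph{proper} skew semi-invariant submanifold of order $1$, so $\cos^{2}\!\theta=\lambda_{1}\in(0,1)$ and $\sin\theta\neq0$ — and keeping straight which of $X,FX,Y,FY$ occupies each shape-operator slot, since that is the only place a stray sign could enter and is precisely what accounts for the $+$ sign in (\ref{e18}) against the $-$ signs in (\ref{e15})--(\ref{e17}).
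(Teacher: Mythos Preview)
Your proposal is correct and follows essentially the same route as the paper: for (\ref{e17}) and (\ref{e18}) the paper, like you, passes to $\overline{\nabla}$, moves $F$ onto $Z$, splits $FZ=TZ+NZ$, iterates once on $TZ$ to produce $T^{2}Z=\cos^{2}\!\theta\,Z$ and the $NTZ$--shape operator term, and divides by $\sin^{2}\!\theta$; (\ref{e19}) is dismissed with ``in a similar way'' and is exactly your one-line computation. Your remark that the proof of (\ref{e15}) never uses $V\in\mathcal{D}^{\bot}$ and therefore yields (\ref{e17}) verbatim with $V$ replaced by $U$ is a nice economy that the paper does not make explicit, but the underlying calculation is identical.
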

\begin{proof} Let $U,Z\in\mathcal{D}^\theta$ and  $X\in\mathcal{D}^T$. Then using (\ref{e5}), (\ref{e4}) and (\ref{e9}),
we have
$g(\nabla_{U}Z,X)=g(\overline{\nabla}_{U}FZ,FX)=g(\overline{\nabla}_{U}TZ,FX)+g(\overline{\nabla}_{U}NZ,FX)$
Again, using (\ref{e4}) and (\ref{e6}), we obtain
$g(\nabla_{U}Z,X)=g(\overline{\nabla}_{U}FTZ,X)-g(A_{NZ}U,FX)$.
Here, if we use (\ref{e13})-(a) and (\ref{e12}), then we get
$g(\nabla_{U}Z,X)=\cos^{2}\!\theta
g(\nabla_{U}Z,X)+g(\overline{\nabla}_{U}NTZ,X)-g(A_{NZ}U,FX)$. After
some calculation, we find (\ref{e17}). For the proof of (\ref{e18}),
using (\ref{e5}), (\ref{e4}) and (\ref{e9}), we have
$g(\nabla_{X}Y,Z)=g(\overline{\nabla}_{X}FY,FZ)=g(\overline{\nabla}_{X}FY,TZ)+g(\overline{\nabla}_{X}FY,NZ)$
for $X,Y\in\mathcal{D}^T$ and  $Z\in\mathcal{D}^\theta$. Again,
using (\ref{e4}) and (\ref{e6}), we obtain
$g(\nabla_{X}Y,Z)=g(\overline{\nabla}_{X}Y,FTZ)+g(h(X,FY),NZ)$. With
the help of (\ref{e13})-(a) and (\ref{e12}), we get
$g(\nabla_{X}Y,Z)=\cos^{2}\!\theta g(\nabla_{X}Y,Z)+
g(\overline{\nabla}_{X}Y,NTZ)+g(h(X,FY),NZ)$. Upon direct
calculation, we find (\ref{e18}). In a similar way, we can obtain
(\ref{e19}).
\end{proof}

\begin{lemma} Let $M$ be a proper skew semi-invariant submanifold of order $1$ of a l.p.R. manifold $\bar{M}.$
Then we have,
\begin{equation}\label{e20}
\begin{array}{c}
g(\nabla_{V}X,Z)=\csc^{2}\!\theta \{g(A_{NTZ}V,X)+g(A_{NZ}V,FX)\}
\end{array},
\end{equation}
\begin{equation}\label{e21}
\begin{array}{c}
g(\nabla_{U}Z,V)=\sec^{2}\!\theta \{g(A_{FV}U,TZ)+g(A_{NTZ}U,V)\}
\end{array},
\end{equation}
\begin{equation}\label{e22}
\begin{array}{c}
g(\nabla_{X}V,Z)=\sec^{2}\!\theta \{g(A_{FV}X,TZ)+g(A_{NTZ}X,V)\}
\end{array},
\end{equation}
for $X\in\mathcal{D}^T$, $U,Z\in\mathcal{D}^\theta$  and
$V\in\mathcal{D}^\bot$.
\end{lemma}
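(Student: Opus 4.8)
The plan is to mimic the proofs of Lemmas 3.3--3.5 essentially verbatim, exploiting the parallelism of $F$ and the Gauss/Weingarten formulas, and using the slant relations \eqref{e12} and \eqref{e13} to collapse the $\cos^2\theta$ terms. For \eqref{e20}, I would start from $g(\nabla_V X,Z)=g(\overline\nabla_V X,Z)$ and rewrite it as $g(\overline\nabla_V FX,FZ)$ using the metric compatibility with $F$ from \eqref{e4}; since $X\in\mathcal D^T$ we have $FX=TX\in\mathcal D^T$ (so $FX$ is tangent), and $FZ=TZ+NZ$. Splitting accordingly and applying Weingarten \eqref{e6} on the $NZ$ piece, together with $\overline\nabla F=0$ to move $F$ through, one reduces to an expression in $T^2Z$ and $N(TZ)$; then \eqref{e12} gives $T^2Z=\cos^2\theta\,Z$ and one absorbs the resulting $\cos^2\theta\,g(\nabla_V X,Z)$ term to the left, dividing by $\sin^2\theta=1-\cos^2\theta$ to get the $\csc^2\theta$ factor. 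Indeed \eqref{e20} should be obtainable more directly: note $g(\nabla_V X,Z)=-g(\nabla_V Z,X)$ is false in general, but $g(\nabla_V X,Z)=-g(X,\nabla_V Z)+V(g(X,Z))=-g(\nabla_V Z,X)$ since $g(X,Z)=0$ as $X,Z$ lie in orthogonal distributions; comparing with \eqref{e15} one sees \eqref{e20} is simply $-1$ times \eqref{e15}, so it follows immediately from Lemma 3.3.

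For \eqref{e21}, the approach is to compute $g(\nabla_U Z,V)=g(\overline\nabla_U Z,V)$ by writing $g(\overline\nabla_U FZ,FV)=g(\overline\nabla_U(TZ+NZ),FV)$ with $FV=NV$ tangent-free (since $V\in\mathcal D^\bot=\ker T$, so $FV=NV\in T^\bot M$). Using Gauss \eqref{e5} on the $TZ$ term gives $g(h(U,TZ),FV)=g(A_{FV}U,TZ)$, and using Weingarten \eqref{e6} plus $\overline\nabla F=0$ on the $NZ$ term produces a term involving $N(TZ)$ via \eqref{e13}(a): $FNZ = F(FZ-TZ)=Z-FTZ = Z - T^2Z - N(TZ)$, and pairing against $V$ the tangential parts yield $g(\nabla_U Z,V)-\cos^2\theta\,g(\nabla_U Z,V)$ after invoking \eqref{e12}, together with $g(A_{NTZ}U,V)$ from the normal part. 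Collecting the $g(\nabla_U Z,V)$ terms and dividing by $\sin^2\theta$ — then rewriting $\csc^2\theta$ as $\sec^2\theta$ would be wrong, so I must be careful: the stated coefficient is $\sec^2\theta$, which suggests instead starting from $g(\overline\nabla_U TZ,NV)$ and using \eqref{e13} differently, ultimately the factor $1/\cos^2\theta$ arises from a $T$ applied to $Z$ rather than from $1-\cos^2\theta$. I would follow the pattern of \eqref{e17}/\eqref{e18} but track where $\cos^2\theta$ versus $\sin^2\theta$ enters, adjusting which slant identity in \eqref{e13} is used.

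Equation \eqref{e22} should then follow from \eqref{e21} by the same orthogonality trick: since $U,Z\in\mathcal D^\theta$ and $V\in\mathcal D^\bot$, $X\in\mathcal D^T$ are in mutually orthogonal distributions, one has $g(\nabla_X V,Z)=-g(V,\nabla_X Z)$ and $g(\nabla_X Z,V)$ relates to $g(\nabla_Z X,V)$ by symmetry of the Levi-Civita connection's torsion-freeness only up to a Lie bracket term; more usefully, I would derive \eqref{e22} directly by the same computation as \eqref{e21} with the roles of the vector fields rearranged, starting from $g(\nabla_X V,Z)=g(\overline\nabla_X V,Z)=g(\overline\nabla_X FV,FZ)$, i.e. $g(\overline\nabla_X NV, TZ+NZ)$, and applying Weingarten to $NV$, which gives $-g(A_{NV}X,TZ)$ — note $A_{NV}=A_{FV}$ — plus a normal-connection term handled via $\overline\nabla F=0$ and \eqref{e13} to produce $g(A_{NTZ}X,V)$, with the $\cos^2\theta$ absorbed to give the $\sec^2\theta$ coefficient.

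The main obstacle will be bookkeeping the decomposition of $F$ applied to various vectors and keeping straight whether a given step yields a factor $1-\cos^2\theta$ (hence $\csc^2\theta$) or an isolated $\cos^2\theta$ in a denominator (hence $\sec^2\theta$); the three identities in \eqref{e13}, especially (a) $T^2+tN=I$ and (d) $Tt+t\omega=0$, must be deployed in exactly the right order, and one must remember that on $\mathcal D^\bot$ the map $T$ vanishes while on $\mathcal D^T$ the map $N$ vanishes, which is what makes the cross terms collapse. Once the first of the three is done carefully, the other two are routine variants, so I expect the write-up to be short: "Using \eqref{e5}, \eqref{e4}, \eqref{e9}, Weingarten \eqref{e6}, and the slant relations \eqref{e12}, \eqref{e13}, one proceeds exactly as in the proofs of Lemma~3.3 and Lemma~3.4; we omit the similar computations."
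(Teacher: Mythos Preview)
Your approach is essentially the paper's: for each identity you pass from $\nabla$ to $\overline\nabla$ via Gauss, invoke the $F$-compatibility of $g$ and $\overline\nabla F=0$ from \eqref{e4}, split $FZ=TZ+NZ$, and then use Weingarten together with \eqref{e12}--\eqref{e13} to produce a $\cos^2\theta$- or $\sin^2\theta$-multiple of the left-hand side that gets absorbed. Your observation that \eqref{e20} is literally the negative of \eqref{e15} (since $g(X,Z)=0$ implies $g(\nabla_V X,Z)=-g(\nabla_V Z,X)$) is a valid and slightly cleaner shortcut that the paper does not take; the paper instead re-derives \eqref{e20} by the same direct computation as \eqref{e15}.

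One point to straighten out: in your treatment of \eqref{e21} you correctly compute that the tangential contribution of $FNZ=tNZ+\omega NZ$ is $tNZ=(I-T^2)Z=\sin^2\theta\,Z$, giving $\sin^2\theta\,g(\nabla_U Z,V)$ on the right. Moving this to the left leaves $(1-\sin^2\theta)=\cos^2\theta$ as the coefficient, so you divide by $\cos^2\theta$, not by $\sin^2\theta$; this is precisely how the $\sec^2\theta$ appears, and it \emph{does} come from the $1-\cos^2\theta$ in \eqref{e13}(a) (via $1-(1-\cos^2\theta)=\cos^2\theta$), not from a separate mechanism. With that bookkeeping fixed, your computation for \eqref{e21} matches the paper's exactly, and \eqref{e22} is the same computation with $U$ replaced by $X$ (the paper simply writes ``Similarly'').
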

\begin{proof} Using (\ref{e5}), (\ref{e4}) and (\ref{e9}),
we have\\
$g(\nabla_{V}X,Z)\!=\!g(\overline{\nabla}_{V}FX,FZ)\!=\!-g(\overline{\nabla}_{V}FZ,FX)\!=\!-g(\overline{\nabla}_{V}TZ,FX)-g(\overline{\nabla}_{V}NZ,FX).$
Again, using  (\ref{e4}) and (\ref{e6}), we obtain
$g(\nabla_{V}X,Z)\!=\!-g(\overline{\nabla}_{V}FTZ,\!X)+g(A_{NZ}V,FX).$
Here, using (\ref{e9}) and (\ref{e13})-(a), we get\\
$g(\nabla_{V}X,Z)=-\cos^{2}\!\theta
g(\nabla_{V}Z,X)-g(\overline{\nabla}_{V}NTZ,X)+g(A_{NZ}V,FX).$
According to direct calculation, we arrive at\\
$g(\nabla_{V}X,Z)=\cos^{2}\!\theta
g(\nabla_{V}X,Z)+g(A_{NTZ}V,X)+g(A_{NZ}V,FX)$ which gives
(\ref{e20}). On the other hand, for any $U,Z\in\mathcal{D}^\theta$
and $V\in\mathcal{D}^\bot$, using (\ref{e5}), (\ref{e4}) and
(\ref{e9}), we have
$g(\nabla_{U}Z,V)=g(\overline{\nabla}_{U}TZ,FV)+g(\overline{\nabla}_{U}NZ,FV).$
Hence, using (\ref{e5}) and (\ref{e4}), we obtain
$g(\nabla_{U}Z,V)=g(h(U,TZ),FV)+g(\overline{\nabla}_{U}FNZ,V).$
Here, if we use (\ref{e10}) and (\ref{e7}), we get
$g(\nabla_{U}Z,V)=g(A_{FV}U,TZ)+g(\overline{\nabla}_{U}tNZ,V)+g(\overline{\nabla}_{U}\omega
NZ,V).$ With the help of (\ref{e13})-(a), (\ref{e13})-(c),
(\ref{e12}) and
(\ref{e6}), we arrive at\\
$g(\nabla_{U}Z,V)=g(A_{FV}U,TZ)+g(\overline{\nabla}_{U}(1-\cos^{2}\!\theta)Z,V)+g(A_{NTZ}U,V).$
Upon direct calculation, we find (\ref{e21}). Similarly, we can
obtain (\ref{e22}).
\end{proof}

\section{warped product skew semi-invariant submanifolds of order $1$\\
of a locally product Riemannian manifold} In this section, we
consider a warped product submanifold of type $M\!\!=\!\!M_{1}\!\times\!_{f}\!M_{T}$ in a l.p.R. manifold $\bar{M},$ where
$M_{1}$ is a hemi-slant submanifold and $M_{T}$ is an invariant
submanifold. Then, it is clear that $M$ is a proper skew
semi-invariant submanifold of order $1$ of $\bar{M}.$ Thus, from
definition of hemi-slant submanifold and skew semi-invariant
submanifold of order $1$, we have
\begin{equation}\label{e23}
\begin{array}{c}
TM=\mathcal{D}^\theta\oplus\mathcal{D}^\bot\oplus\mathcal{D}^T
\end{array}.
\end{equation}
In particular, if $\mathcal{D}^\theta=\{0\}$, then $M$ is a warped
product semi-invariant submanifold \cite{S2}. If
$\mathcal{D}^\bot=\{0\}$, then $M$ is a warped product semi-slant
submanifold \cite{S3}.

\begin{remark} From Theorem 3.1 of \cite{S2}, we know that there are no proper
warped product semi-invariant submanifolds of type
$M_{T}\times_{f}M_{\bot}$ of a l.p.R. manifold $\bar{M}$ such that
$M_{T}$ is invariant submanifold and $M_{\bot}$ is anti-invariant
submanifold of $\bar{M}$. On the other hand, from Theorem 3.1 of
\cite{S3}, we know that there is no proper warped product
submanifold in the form $M_{T}\times_{f}M_{\theta}$ of a l.p.R.
manifold $\bar{M}$ such that $M_{\theta}$ is a proper slant
submanifold and $M_{T}$ is an invariant submanifold of $\bar{M}$.
Thus, we conclude that there is no warped product skew
semi-invariant submanifold of order $1$ of type
$M_{T}\times_{f}M_{1}$ of a l.p.R. manifold $\bar{M}$ such that
$M_{1}$ is a hemi-slant submanifold and $M_{T}$ is an invariant
submanifold of $\bar{M}$.
\end{remark}
We now present an example of warped product semi-invariant
submanifold of order $1$ of type $M_{1}\times_{f}M_{T}$ in a l.p.R.
manifold.

\begin{example} Consider the locally product Riemannian manifold $\mathbb{R}^{10}=\mathbb{R}^{5}\times\mathbb{R}^{5}$
with usual metric $g$ and almost product structure $F$ defined by
\begin{equation}\label{}\nonumber
\begin{array}{c}
F(\partial_{i})=\partial_{i},\quad F(\partial_{j})=-\partial_{j}
\end{array},
\end{equation}
where $i\in\{1,...,5\},  j\in\{6,...,10\},  \partial_{k}=\frac{\partial}{\partial x_{k}}$ and $(x_{1},...,x_{10})$
are natural coordinates of $\mathbb{R}^{10}$. Let $M$ be a submanifold of $\bar{M}=(\mathbb{R}^{10}\!,g,F)$ given by
\begin{equation}\label{}\nonumber
\begin{array}{c}
\phi(x,y,z,u,v)=(x+y,\,x-y,\,x\!\cos\!u,\,x\!\sin\!u,\,z,\,-z,\,x,\,\frac{2}{\sqrt{3}}y,\,x\!\cos\!v,\,x\!\sin\!v)
\end{array},
\end{equation}
where $x>0.$\\

Then, we easily see that the local frame of $TM$ is spanned by
\begin{equation}\label{}\nonumber
\begin{array}{c}
\phi_{x}=\partial_{1}+\partial_{2}+\cos\!u\partial_{3}+\sin\!u\partial_{4}+\partial_{7}+\cos\!v\partial_{9}+\sin\!v\partial_{10}
\end{array},
\end{equation}
\begin{equation}\label{}\nonumber
\begin{array}{c}
\phi_{y}=\partial_{1}-\partial_{2}+\frac{2}{\sqrt{3}}\partial_{8},\,\quad \phi_{z}=\partial_{5}-\partial_{6}
\end{array},
\end{equation}
\begin{equation}\label{}\nonumber
\begin{array}{c}
\phi_{u}=-x\!\sin\!u\partial_{3}+x\!\cos\!u\partial_{4},\,\quad \phi_{v}=-x\!\sin\!v\partial_{9}+x\!\cos\!v\partial_{10}
\end{array}.
\end{equation}
 Then by direct calculation, we see that $\mathcal{D}^\theta=span\{\phi_{x},\phi_{y}\}$ is a slant distribution with
 slant angle $\theta=\arccos\frac{1}{5}$ and $\mathcal{D}^\bot=span\{\phi_{z}\}$ is an anti-invariant distribution
 since $F(\phi_{z})$ is orthogonal to $TM.$ Moreover, $\mathcal{D}^T=span\{\phi_{u},\phi_{v}\}$ is an invariant distribution.
 Thus, we conclude that $M$ is a proper skew semi-invariant submanifold of order 1 of $\bar{M}$. Furthermore,
 one can easily see that $\mathcal{D}^\theta\oplus\mathcal{D}^\bot$ and $\mathcal{D}^T$ are integrable.
 If we denote the integral submanifolds $\mathcal{D}^\theta, \mathcal{D}^\bot$ and $\mathcal{D}^T$ by
 $M_{\theta}, M_{\bot}$ and $M_{T}$, respectively, then the induced metric tensor of $M$ is
\begin{equation}\label{}\nonumber
\begin{array}{c}
ds^{2}=5dx^{2}+\frac{10}{3}dy^{2}+2dz^{2}+x^{2}(du^{2}+dv^{2})\\
\,\,=g_{M_{\theta}}+g_{M_{\bot}}+x^{2}g_{M_{T}}.\qquad\qquad\qquad
\end{array}
\end{equation}
Thus, $M=(M_{\theta}\times M_{\bot})\times_{x^{2}}M_{T}$ is a warped product skew semi-invariant
submanifold of order 1 of $\bar{M}$ with warping function $f=x.$
\end{example}

Let $\mathcal{D}^\theta$ and $\mathcal{D}^T$  be slant and invariant distributions on $M$,
respectively. Then we say that $M$ is $(\mathcal{D}^\theta,\mathcal{D}^T)$ \emph{mixed totally geodesic} if $h(Z,X)\!=\!0,$
where $Z\!\in\!\mathcal{D}^\theta$ and $X\!\in\!\mathcal{D}^T$ \cite{Ro}.\\

Before giving a necessary and sufficient condition for skew
semi-invariant submanifold of order 1 to be a locally warped
product, we recall that the S. Hiepko's result \cite{Hi}, (cf.
\cite{Di}, Remark 2.1): Let $\mathcal{D}_{1}$ be a vector subbundle
in the tangent bundle of a Riemannian manifold $M$ and let
$\mathcal{D}_{2}$ be its normal bundle. Suppose that the two
distributions are involutive. If we denote by $M_{1}$ and $M_{2}$
the integral manifolds of  $\mathcal{D}_{1}$ and  $\mathcal{D}_{2}$,
respectively, then $M$ is locally isometric to warped product
$M_{1}\times_{f}M_{2}$ if the integral manifold $M_{1}$ totally
geodesic and the integral manifold $M_{2}$ is  an extrinsic sphere,
in other word, $M_{2}$ is a totally umbilical submanifold with a
parallel mean  curvature vector.
\begin{theorem}
Let $M$ be a $(\mathcal{D}^\theta,\mathcal{D}^T)$ mixed totally geodesic proper
skew semi-invariant submanifold of order 1 with integrable distribution $\mathcal{D}^T$
of a l.p.R. manifold $\bar{M}$. Then $M$ is a locally warped product submanifold if and only if
\begin{equation}\label{e24}
\begin{array}{c}
A_{FV}FX=-V[\sigma]X
\end{array},
\end{equation}
and
\begin{equation}\label{e25}
\begin{array}{c}
A_{NZ}FX+A_{NTZ}X=-Z[\sigma]\sin^{2}\!\theta X
\end{array}
\end{equation}
for $X\in\mathcal{D}^T$, $Z\in\mathcal{D}^\theta$, $V\in\mathcal{D}^\bot$
and a function $\sigma$ defined on $M$ such that $Y[\sigma]=0$ for  $Y\in\mathcal{D}^T$.
\end{theorem}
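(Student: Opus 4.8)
The plan is to apply the result of S.~Hiepko recalled just before the statement, taking as complementary distributions $\mathcal{D}_1:=\mathcal{D}^\bot\oplus\mathcal{D}^\theta$ and $\mathcal{D}^T$, the latter already involutive by hypothesis. For the ``if'' direction I would verify that $\mathcal{D}_1$ is involutive with totally geodesic leaves in $M$ and that the leaves of $\mathcal{D}^T$ are extrinsic spheres (totally umbilical with parallel mean curvature vector); Hiepko's theorem then yields $M=M_1\times_f M_T$ locally, with $M_1$ the $\mathcal{D}_1$-leaf (a hemi-slant submanifold), $M_T$ the $\mathcal{D}^T$-leaf, and $f=e^{\sigma}$ up to a positive constant. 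For the ``only if'' direction I would read (\ref{e24}) and (\ref{e25}) off the warped product structure with the help of (\ref{e3}).

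\emph{Sufficiency, first step.} Assuming (\ref{e24}) and (\ref{e25}), I would use (\ref{e7}) and the symmetry of the shape operator to rewrite the right-hand sides of (\ref{e14})--(\ref{e17}) as, respectively, $-g(A_{FW}FX,V)$, $-\csc^{2}\!\theta\,g(A_{NZ}FX+A_{NTZ}X,V)$, $-g(A_{FV}FX,Z)$, $-\csc^{2}\!\theta\,g(A_{NZ}FX+A_{NTZ}X,U)$. Substituting (\ref{e24}) and (\ref{e25}), each of these becomes a multiple of $g(X,\cdot)$ with $X\in\mathcal{D}^T$ and the second argument lying in $\mathcal{D}^\bot$ or $\mathcal{D}^\theta$, hence it vanishes. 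Thus $g(\nabla_A B,X)=0$ for all $A,B\in\mathcal{D}_1$ and all $X\in\mathcal{D}^T$. Antisymmetrising in $A,B$ gives $g([A,B],X)=0$, so $\mathcal{D}_1$ is integrable; the same identity says $\nabla_A B\in\mathcal{D}_1$, so the leaves of $\mathcal{D}_1$ are totally geodesic in $M$.

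\emph{Sufficiency, second step.} For $X,Y\in\mathcal{D}^T$, rewriting (\ref{e18}) and (\ref{e19}) in the same fashion and applying (\ref{e25}) and (\ref{e24}) yields
\[
g(\nabla_X Y,N)=-g(X,Y)\,N[\sigma]\qquad\text{for all }N\in\mathcal{D}_1 .
\]
Since $Y[\sigma]=0$ for $Y\in\mathcal{D}^T$, the vector field $\mathrm{grad}\,\sigma$ on $M$ lies in $\mathcal{D}_1$, so this identity says precisely that the leaves of $\mathcal{D}^T$ are totally umbilical in $M$ with mean curvature vector $-\mathrm{grad}\,\sigma$. To see that $-\mathrm{grad}\,\sigma$ is parallel in the normal bundle $\mathcal{D}_1$ of such a leaf, I would invoke the symmetry of the Hessian: for $X\in\mathcal{D}^T$ and $N\in\mathcal{D}_1$,
\[
g(\nabla_X\mathrm{grad}\,\sigma,N)=\mathrm{Hess}\,\sigma(X,N)=\mathrm{Hess}\,\sigma(N,X)=N\big[X[\sigma]\big]-(\nabla_N X)[\sigma]=-(\nabla_N X)[\sigma],
\]
using $X[\sigma]=0$. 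By the first step the leaves of $\mathcal{D}_1$ are totally geodesic, so $g(\nabla_N X,N')=-g(X,\nabla_N N')=0$ for $N'\in\mathcal{D}_1$; hence $\nabla_N X\in\mathcal{D}^T$ and $(\nabla_N X)[\sigma]=g(\mathrm{grad}\,\sigma,\nabla_N X)=0$. Therefore $g(\nabla_X\mathrm{grad}\,\sigma,N)=0$, the mean curvature vector is parallel, each leaf of $\mathcal{D}^T$ is an extrinsic sphere, and Hiepko's theorem completes this direction.

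\emph{Necessity, and the expected obstacle.} Conversely, suppose $M=M_1\times_f M_T$ locally, with $M_1$ tangent to $\mathcal{D}^\bot\oplus\mathcal{D}^\theta$ and $M_T$ tangent to $\mathcal{D}^T$; put $\sigma=\ln f$, so that $Y[\sigma]=0$ for $Y\in\mathcal{D}^T$ because $f$ is a function on $M_1$. By (\ref{e3}), $\nabla_A X=A[\sigma]X$ for $A$ tangent to $M_1$ and $X$ tangent to $M_T$, whence $g(\nabla_X Y,A)=-A[\sigma]\,g(X,Y)$ for $X,Y\in\mathcal{D}^T$; moreover $M_1$ is totally geodesic, so $\nabla_A B$ is tangent to $M_1$ whenever $A,B$ are. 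Feeding these facts into the lemmas of Section~3 — the left-hand sides of (\ref{e14})--(\ref{e17}) and (\ref{e20}) vanish, while in (\ref{e18}) and (\ref{e19}) one uses the displayed formula for $g(\nabla_X Y,A)$ — and using the symmetry of $A$, one recovers the $\mathcal{D}^\bot$-, $\mathcal{D}^\theta$- and $\mathcal{D}^T$-components of $A_{FV}FX$ and of $A_{NZ}FX+A_{NTZ}X$, which is exactly (\ref{e24}) and (\ref{e25}). The delicate point of the whole argument is the extrinsic-sphere condition in the sufficiency part — the parallelism of the mean curvature vector of the $\mathcal{D}^T$-leaves — where the hypothesis $Y[\sigma]=0$ and the total geodesy of the $\mathcal{D}_1$-leaves must be combined through the symmetry of the Hessian; the rest is bookkeeping about signs and about which projection of $A_{FV}FX$ and $A_{NZ}FX+A_{NTZ}X$ each lemma of Section~3 controls.
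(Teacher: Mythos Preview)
Your proposal is correct and follows essentially the same strategy as the paper: both directions are handled through Hiepko's theorem, with the lemmas of Section~3 supplying the components of $A_{FV}FX$ and $A_{NZ}FX+A_{NTZ}X$ along $\mathcal{D}^\bot$, $\mathcal{D}^\theta$ and $\mathcal{D}^T$. Your treatment of the parallelism of the mean curvature vector of the $\mathcal{D}^T$-leaves via the symmetry of the Hessian is a neat compression of the paper's explicit computation (which expands $g(\nabla_X(\mathrm{grad}^\bot\sigma+\mathrm{grad}^\theta\sigma),E)$ term by term and reduces it using $g(\nabla_X V,Z)=0$ from (\ref{e22})), but the underlying mechanism---$X[\sigma]=0$ together with the total geodesy of the $\mathcal{D}_1$-leaves---is the same.
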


\begin{proof} Let $M=\!\!M_{1}\!\times\!_{f}\!M_{T}$ be a $(\mathcal{D}^\theta,\mathcal{D}^T)$ mixed totally geodesic warped product
proper skew semi-invariant submanifold of order 1 with integrable
distribution $\mathcal{D}^T$ of a l.p.R. manifold $\bar{M}$. Then
using (\ref{e14}) and (\ref{e16}), we have $g(A_{FV}W,FX)=0$ and
$g(A_{FV}Z,FX)=0$ for any $V,W\in\mathcal{D}^\bot$,
$Z\in\mathcal{D}^\theta$ and $X\in\mathcal{D}^T$. Since $A$ is self
adjoint, we deduce that $A_{FV}FX$ has no components in $TM_{1}$. So
$A_{FV}FX\in \mathcal{D}^T.$ Thus, using (\ref{e5}), (\ref{e4}) and
(\ref{e3}), for any $Y\in\mathcal{D}^T$, we obtain
$g(A_{FV}FX,Y)\!=\!-g(\overline{\nabla}_{Y}FV,FX)\!=\!-g(\overline{\nabla}_{Y}V,X)\!=\!-g(\nabla_{Y}V,X)=-V(\ln\!f)g(X,Y).$
Which proves (\ref{e24}). Since $M$ is
$(\mathcal{D}^\theta,\mathcal{D}^T)$ mixed totally geodesic for any
$Z\in\mathcal{D}^\theta$ and $X\!\!\in\!\mathcal{D}^T\!$, we have
$g(A_{NT\!Z}X,Z)\!=0.$ It means that $A_{NT\!Z}X\!$ has no
components in $\mathcal{D}^\theta$. On the other hand, from Lemma
3.3 of \cite{Ta}, we know that $TZ\!\in\!\mathcal{D}^\theta$ for any
$Z\!\in\!\mathcal{D}^\theta$. Thus, using this fact and (\ref{e3}),
from (\ref{e22}), we get $g(A_{NTZ}X,V)\!=0)$, that is, $A_{NTZ}X$
has no components in $\mathcal{D}^\bot.$ Thus, from (\ref{e23}), we
conclude that $A_{NTZ}X\in\mathcal{D}^T$. Also, we have
$A_{NZ}X\in\mathcal{D}^T$. Then, for $X,Y\in\mathcal{D}^T$ and
$Z\in\mathcal{D}^\theta$, with the help of (\ref{e3}), from
(\ref{e18}), we have ${g(A_{NTZ}Y,X)+g(A_{NZ}FY,X)=-\sin^{2}\!\theta
g(\nabla_{X}Z,Y)}=-\sin^{2}\!\theta Z(\ln\!f)g(Y,X)$. This proves
(\ref{e25}). Moreover, $Y(\ln\!f)=0$ for a warped product
proper skew semi-invariant submanifold of order 1, we obtain $\sigma=\ln\!f.$\\

Conversely, suppose that  $M$ is
$(\mathcal{D}^\theta,\mathcal{D}^T)$ mixed totally geodesic proper
skew semi-invariant submanifold of order 1 with integrable
distribution $\mathcal{D}^T$ of a l.p.R. manifold $\bar{M}$ such
that (\ref{e24}) and (\ref{e25}) hold. We know from Theorem 4.6 of
\cite{Ta}, $\mathcal{D}^\bot$ is always integrable. So, we have
$g(\nabla_{V}W,X)=0$ for $V,W\in\mathcal{D}^\bot$ and
$X\in\mathcal{D}^T$. Using this fact, (\ref{e24}), (\ref{e25}) and
(\ref{e15})-(\ref{e17}), it is not difficult to see that $M_{1}$ is
totally geodesic in $M$. Let $M_{T}$ be the integral manifold of
$\mathcal{D}^T$ and $h_{T}$ be the second fundamental form of
$M_{T}$ in $M$. From (\ref{e5}), we have
$g(h_{T}(X,Y),V)=g(\nabla_{X}Y,V)$ for $X,Y\in\mathcal{D}^T$ and
$V\in\mathcal{D}^\bot.$ Then, (\ref{e19}) imply that
$g(h_{T}(X,Y),V)=g(A_{FV}FY,X)$. Thus, using (\ref{e24}), we obtain
\begin{equation}\label{e26}
\begin{array}{c}
g(h_{T}(X,Y),V)=-V[\sigma]g(Y,X)
\end{array}.
\end{equation}
Similarly, from (\ref{e5}), we have
$g(h_{T}(X,Y),Z)=g(\nabla_{X}Y,Z)$ for $X,Y\in\mathcal{D}^T$ and
$Z\in\mathcal{D}^\theta.$ Using (\ref{e18}), we obtain
$g(h_{T}(X,Y),Z)=\csc^{2}\!\theta \{g(A_{NTZ}Y,X)+g(A_{NZ}FY,X)\}.$
Thus, from (\ref{e25}), we get
\begin{equation}\label{e27}
\begin{array}{c}
g(h_{T}(X,Y),Z)=-Z[\sigma]g(X,Y)
\end{array}.
\end{equation}
Thus, for any $E=V+Z\in TM_{1}$, from (\ref{e26}) and (\ref{e27}), we arrive at
\begin{equation}\label{e28}
\begin{array}{c}
g(h_{T}(X,Y),E)=g(h_{T}(X,Y),V)+g(h_{T}(X,Y),Z)\\
\qquad\quad=-\{V[\sigma]+Z[\sigma]\}g(X,Y).
\end{array}
\end{equation}
Last equation (\ref{e28}) says that $M_{T}$ is totally umbilical in
$M.$ Let denote by $grad^{\bot}\sigma$ and $grad^{\theta}\sigma$ the
gradient of $\sigma$ on $\mathcal{D}^\bot$ and $\mathcal{D}^\theta$,
respectively. From (\ref{e28}), we write
\begin{equation}\label{e29}
\begin{array}{c}
h_{T}(X,Y)=-\{grad^{\bot}\sigma+grad^{\theta}\sigma\}g(X,Y).
\end{array}
\end{equation}
Thus, for any $E=V+Z\in TM_{1}$, we have\\

$\!\!\!\!\!g(\nabla_{X}(grad^{\bot}\sigma+grad^{\theta}\sigma),E)=g(\nabla_{X}grad^{\bot}\sigma,E)+g(\nabla_{X}grad^{\theta}\sigma,E)$\\

$\!\!\!\!\!=\{Xg(grad^{\bot}\sigma,V)-g(grad^{\bot}\sigma,\nabla_{X}E)\}$\\

$\!\!\!\!\!+\{Xg(grad^{\theta}\sigma,Z)-g(grad^{\theta}\sigma,\nabla_{X}E)\}$\\

$\!\!\!\!\!=X[V[\sigma]]-g(grad^{\bot}\sigma,\nabla_{X}E)+X[Z[\sigma]]-g(grad^{\theta}\sigma,\nabla_{X}E)$.
On the other hand, if we use (\ref{e25}) in (\ref{e22}), then we get
$g(\nabla_{X}V,Z)=-g(\nabla_{X}Z,V)=0.$ Using this fact, we obtain\\

$\!\!\!\!\!\!g(\nabla_{X}\!(grad^{\bot}\sigma+grad^{\theta}\!\sigma),\!E)\!=\!X\![V\![\sigma]]-g(grad^{\bot}\sigma,\!\nabla_{X}\!Z)+\!X\![Z\![\sigma]]-
g(grad^{\theta}\!\sigma,\!\nabla_{X}\!V)$ Upon direct calculation,
we arrive at\\

$\!\!\!\!\!g(\nabla_{X}\!(grad^{\bot}\sigma+grad^{\theta}\!\sigma),E)=\{\!X\![Z\![\sigma]]-[X,Z][\sigma]+g(grad^{\bot}\sigma,\!\nabla_{Z}\!X)\}$\\

$\!\!\!\!\!+\{X\![V\![\sigma]-[X,V][\sigma]+g(grad^{\theta}\!\sigma,\!\nabla_{V}\!X)\}$.
After some calculation, we get\\

$\!\!\!\!\!g(\nabla_{X}\!(grad^{\bot}\sigma+grad^{\theta}\!\sigma),E)$\\

$\!\!\!\!\!=\{Z[X[\sigma]]+g(grad^{\bot}\sigma,\!\nabla_{Z}X)+V[X[\sigma]]+g(grad^{\theta}\!\sigma,\!\nabla_{V}X)\}$.\\

\!\!\!\!\!Since $X[\sigma]=0$, from  the last equation, we derive\\

$\!\!\!\!\!g(\nabla_{X}\!(grad^{\bot}\sigma+grad^{\theta}\!\sigma),E)=-g(\nabla_{Z}grad^{\bot}\sigma,X)-g(\nabla_{V}grad^{\theta}\sigma,X)$.\\

Here, we know that $\nabla_{Z}grad^{\bot}\sigma,
\nabla_{V}grad^{\theta}\sigma\in TM_{1}$, since $M_{1}$ is totally
geodesic. Hence, we obtain
$g(\nabla_{X}\!(grad^{\bot}\sigma+grad^{\theta}\!\sigma),E)=0$. It
means that $grad^{\bot}\sigma+grad^{\theta}\!\sigma$ is parallel in
$M.$ This fact and (\ref{e29}) imply that $M_{T}$ is an extrinsic
sphere. This completes the proof.
\end{proof}

\section{a chen-type inequality for warped product\\ skew semi-invariant submanifolds of order $1$}

In this section, we prove that the invariant distribution which is
involved in the definition of the warped product proper skew
semi-invariant submanifolds of order $1$ of a l.p.R. manifold is
integrable under some restrictions. We also give an inequality
similar to Chen's inequality \cite{Chenn} for the squared norm of
the second fundamental form in terms of the warping function for
such submanifolds. We first give the following two lemmas for later
use.

\begin{lemma} Let $M=M_{1}\times_{f}M_{T}$ be a warped product proper skew semi-invariant
submanifold of order $1$ of a l.p.R. manifold $\bar{M}.$ Then we
have,
\begin{equation}\label{e30}
\begin{array}{c}
g(h(X,V),FW)=0
\end{array}
\end{equation}
and
\begin{equation}\label{e31}
\begin{array}{c}
g(h(X,V),NZ)=0
\end{array},
\end{equation}

for $X\in\mathcal{D}^T$, $Z\in\mathcal{D}^\theta$  and
$V,W\in\mathcal{D}^\bot$.
\end{lemma}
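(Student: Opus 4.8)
The plan is to compute $g(h(X,V),FW)$ and $g(h(X,V),NZ)$ by exploiting the warped product structure $M=M_1\times_f M_T$, in which $\mathcal{D}^\bot$ and $\mathcal{D}^\theta$ span $TM_1$ while $\mathcal{D}^T$ is tangent to $M_T$. The key mechanism is the Bishop--O'Neill formula (\ref{e3}), which tells us $\nabla_X V = \nabla_V X = V(\ln f)X$ for $X\in\mathcal{D}^T$ and $V\in\mathcal{D}^\bot$ (and similarly with $Z\in\mathcal{D}^\theta$ in place of $V$). First I would rewrite $g(h(X,V),FW)$ using the Gauss formula (\ref{e5}) and the $F$-parallelism (\ref{e4}): since $FW$ is normal and $F$ is a metric connection parallel tensor, $g(h(X,V),FW)=g(\overline{\nabla}_X V,FW)=g(F\overline{\nabla}_X V,W)=g(\overline{\nabla}_X(FV),W)=g(\overline{\nabla}_X V,W)$, using $FV=V$ is false in general — instead $V\in\mathcal{D}^\bot$ means $FV=NV$ is purely normal, so I should instead start from $g(h(X,V),FW)=g(\overline{\nabla}_X V,FW)$ and push $F$ across to get $g(\overline{\nabla}_X(FV),W)=g(\overline{\nabla}_X NV,W)=-g(A_{NV}X,W)=-g(h(X,W),NV)$, which is symmetric in $V,W$; separately, expanding the left side directly via (\ref{e3}) gives $g(\nabla_X V,FW)+g(h(X,V),FW)$, and $\nabla_X V=V(\ln f)X\in\mathcal{D}^T$ is orthogonal to the normal vector $FW$, so the tangential term drops. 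Comparing the resulting symmetry relations I expect a cancellation forcing the expression to vanish.

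For (\ref{e31}) the strategy is the same: write $g(h(X,V),NZ)=g(\overline{\nabla}_X V,NZ)$, split $FZ=TZ+NZ$ with $TZ\in\mathcal{D}^\theta$ (recall $\mathcal{D}^\theta$ is $T$-invariant, Lemma 3.3 of \cite{Ta}), so $g(\overline{\nabla}_X V,NZ)=g(\overline{\nabla}_X V,FZ)-g(\overline{\nabla}_X V,TZ)=g(\overline{\nabla}_X (FV),Z)-g(\nabla_X V,TZ)$. The first term equals $g(\overline{\nabla}_X NV,Z)=-g(A_{NV}X,Z)=-g(h(X,Z),NV)$, and since $M$ need not be mixed totally geodesic here I cannot kill this directly — instead I would use the Bishop--O'Neill relation again on $\nabla_X V=V(\ln f)X$, which lies in $\mathcal{D}^T$ and is thus orthogonal to $TZ\in\mathcal{D}^\theta$, killing the second term, and then re-expand $g(h(X,Z),NV)$ symmetrically. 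Alternatively, and probably more cleanly, I would apply the already-proven Lemmas 3.3--3.5: equations (\ref{e15})--(\ref{e22}) express mixed covariant derivatives like $g(\nabla_V X,Z)$, $g(\nabla_X V,Z)$ in terms of shape operators $A_{FV}$, $A_{NZ}$, $A_{NTZ}$; combined with the Bishop--O'Neill identities $g(\nabla_X V,Z)=V(\ln f)g(X,Z)=0$ and $g(\nabla_X Z,V)=Z(\ln f)g(X,V)=0$ (orthogonality of $\mathcal{D}^\bot,\mathcal{D}^\theta$), (\ref{e22}) immediately gives $g(A_{FV}X,TZ)+g(A_{NTZ}X,V)=0$, and manipulating this together with (\ref{e20}) and the $T$-invariance of $\mathcal{D}^\theta$ should isolate $g(h(X,V),NZ)=g(A_{NZ}X,V)$ and show it is zero.

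I would carry the steps in this order: (1) record the consequences of (\ref{e3}) for $M=M_1\times_f M_T$, namely $\nabla_X V,\nabla_X Z\in\mathcal{D}^T$ for $V\in\mathcal{D}^\bot$, $Z\in\mathcal{D}^\theta$, $X\in\mathcal{D}^T$; (2) derive (\ref{e30}) by pushing $F$ through the Gauss/Weingarten formulas and using the orthogonality of $\mathcal{D}^T$ to the $\mathcal{D}^\bot$-part of $TM_1$; (3) derive (\ref{e31}) similarly, using in addition $FZ=TZ+NZ$, $TZ\in\mathcal{D}^\theta$, and either a direct symmetry argument or Lemmas 3.4--3.5 with the vanishing of the mixed Levi-Civita components. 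The main obstacle I anticipate is step (3): unlike Theorem 4.5 there is no mixed-totally-geodesic hypothesis here, so I must be careful that every unwanted term is killed purely by the warped-product orthogonality $g(X,V)=g(X,Z)=g(V,Z)=0$ and by $F$-parallelism, rather than by any curvature or second-fundamental-form vanishing assumption; the bookkeeping of which piece of $\overline{\nabla}$ lands in which of the three summands $\mathcal{D}^T,\mathcal{D}^\bot,\mathcal{D}^\theta$ is where an error could creep in.
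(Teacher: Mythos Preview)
Your approach for (\ref{e30}) has a genuine gap. Pushing $F$ onto $V$ yields
\[
g(h(X,V),FW)=g(\overline{\nabla}_X FV,W)=-g(A_{FV}X,W)=-g(h(X,W),FV),
\]
which is an \emph{antisymmetry} in $V,W$, not a symmetry as you write; and an antisymmetric bilinear form on $\mathcal{D}^\bot$ need not vanish when $\dim\mathcal{D}^\bot\geq 2$. Your ``separate expansion'' $g(\overline{\nabla}_X V,FW)=g(\nabla_X V,FW)+g(h(X,V),FW)$ is circular: the first term is zero simply because a tangent vector is paired with a normal one, so no new relation is produced and the argument does not close.

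The missing idea is to exploit the symmetry of $h$ and differentiate in the $V$-direction, then push $F$ onto $X$ rather than onto $V$. Since $X\in\mathcal{D}^T$, the vector $FX$ stays \emph{tangent} (in $\mathcal{D}^T$), so
\[
g(h(X,V),FW)=g(\overline{\nabla}_V X,FW)=g(\overline{\nabla}_V FX,W)=g(\nabla_V FX,W)=V(\ln f)\,g(FX,W)=0,
\]
the last step by (\ref{e3}) and $\mathcal{D}^T\perp\mathcal{D}^\bot$. The same trick handles (\ref{e31}) in two lines: write $NZ=FZ-TZ$ and compute
\[
g(h(X,V),NZ)=g(\overline{\nabla}_V X,FZ)-g(\overline{\nabla}_V X,TZ)=g(\nabla_V FX,Z)-g(\nabla_V X,TZ),
\]
each term equal to $V(\ln f)$ times a pairing of a $\mathcal{D}^T$-vector with a $\mathcal{D}^\theta$-vector, hence zero. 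This is exactly the paper's proof. Your alternative route for (\ref{e31}) via (\ref{e20}) and (\ref{e22}) can in fact be pushed through (replace $Z$ by $TZ$, use $T^2=\cos^2\!\theta\cdot I$, and solve the resulting linear system to get $\sin^2\!\theta\,g(h(X,V),NZ)=0$), but it is considerably more laborious and gives no analogous path for (\ref{e30}).
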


\begin{proof} For any $V,W\in\mathcal{D}^\bot$ and $X\in\mathcal{D}^T$,  using (\ref{e5}), (\ref{e4}) and (\ref{e3}),
we get
$g(h(X,V),FW)\!\!=\!g(\overline{\nabla}_{V}\!X,FW)\!\!=\!g(\overline{\nabla}_{V}\!F\!X,\!W)\!\!=\!g(\nabla_{V}\!F\!X,\!W)\!\!=\!\!V\!(\ln\!
f)g(F\!X,W)\!\!=\!0$,  since $g(FX,W)=0.$ Hence (\ref{e30}) follows.
In a similar way, using (\ref{e5}), (\ref{e4}), (\ref{e9})  and
(\ref{e3}), we have
\begin{equation}\label{}\nonumber
\begin{array}{c}
g(h(X,V),NZ)=g(\overline{\nabla}_{V}X,NZ)=g(\overline{\nabla}_{V}X,FZ)-g(\overline{\nabla}_{V}X,TZ)\\
=g(\overline{\nabla}_{V}FX,Z)-g(\overline{\nabla}_{V}X,TZ)\\
=g(\nabla_{V}FX,Z)-g(\nabla_{V}X,TZ)\\
\qquad\qquad\quad= V(\ln\! f)g(FX,Z)-V(\ln\! f)g(X,TZ)=0,
\end{array}
\end{equation}
since $g(FX,Z)=0$ and $g(X,TZ)=0$.
\end{proof}

\begin{lemma} Let $M=M_{1}\times_{f}M_{T}$ be a warped product proper skew semi-invariant
submanifold of order $1$ of a l.p.R. manifold $\bar{M}.$ Then we
have,
\begin{equation}\label{e32}
\begin{array}{c}
g(h(X,FY),FV)=-V(\ln\! f)g(X,Y)
\end{array}
\end{equation}
and
\begin{equation}\label{e33}
\begin{array}{c}
g(h(X,Y),NZ)=TZ(\ln\! f)g(X,Y)
\end{array}
\end{equation}
for $X,Y\in\mathcal{D}^T$, $Z\in\mathcal{D}^\theta$  and
$V\in\mathcal{D}^\bot$.
\end{lemma}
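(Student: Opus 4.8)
The plan is to establish both identities by the same mechanism: use the Gauss formula (\ref{e5}) to push the computation into $\bar{M}$, use from (\ref{e4}) that $F$ is parallel and an isometry ($\overline{\nabla}F=0$ and $g(FU,FW)=g(U,W)$) to move a factor $F$ between the two arguments of $g$, and then read off a derivative of $\ln f$ from the Bishop--O'Neill relation (\ref{e3}). The structural facts I shall use are the decomposition $TM=\mathcal{D}^\bot\oplus\mathcal{D}^T\oplus\mathcal{D}^\theta=TM_1\oplus TM_T$ into mutually orthogonal pieces, that $\mathcal{D}^T$ is $F$-invariant (so $FY\in\mathcal{D}^T$ whenever $Y\in\mathcal{D}^T$), and that $TZ\in\mathcal{D}^\theta$ for $Z\in\mathcal{D}^\theta$ (Lemma 3.3 of \cite{Ta}). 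Throughout, $X,Y\in\mathcal{D}^T=TM_T$ are fibre directions while $V\in\mathcal{D}^\bot$ and $Z,TZ\in\mathcal{D}^\theta$ lie in the base $TM_1$, so that (\ref{e3}) governs $\nabla_X(\cdot)$ and $\nabla_{(\cdot)}X$ for these vectors.

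For (\ref{e32}): since $V\in\mathcal{D}^\bot=\mathrm{Ker}\,T$ we have $FV=NV\in T^\bot M$, so by (\ref{e5}) $g(h(X,FY),FV)=g(\overline{\nabla}_X FY,FV)$; using $\overline{\nabla}F=0$ and $g(FU,FW)=g(U,W)$ this becomes $g(\overline{\nabla}_X Y,V)$, which equals $g(\nabla_X Y,V)$ because $V$ is tangent; finally $g(Y,V)=0$ (as $\mathcal{D}^T\perp\mathcal{D}^\bot$) gives $g(\nabla_X Y,V)=-g(Y,\nabla_X V)=-V(\ln f)\,g(X,Y)$ by (\ref{e3}), which is (\ref{e32}). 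For (\ref{e33}), split $NZ=FZ-TZ$ using (\ref{e9}), so by (\ref{e5}) $g(h(X,Y),NZ)=g(\overline{\nabla}_X Y,FZ)-g(\overline{\nabla}_X Y,TZ)$. Since $TZ\in\mathcal{D}^\theta\subset TM_1$ is tangent, the second term is $g(\nabla_X Y,TZ)=-g(Y,\nabla_X TZ)=-TZ(\ln f)\,g(X,Y)$ by orthogonality of the distributions and (\ref{e3}). For the first term, $F$-invariance of $\mathcal{D}^T$ together with $\overline{\nabla}F=0$ lets one write $\overline{\nabla}_X Y=F\overline{\nabla}_X(FY)$, so that, using the isometry property of $F$ and (\ref{e5}) with $Z$ tangent, $g(\overline{\nabla}_X Y,FZ)=g(\overline{\nabla}_X(FY),Z)=g(\nabla_X(FY),Z)$; applying (\ref{e3}) to this last warped-product covariant derivative and combining the two contributions yields (\ref{e33}).

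I expect the only real difficulty to be organisational: one must keep careful track of which summand of $TM=\mathcal{D}^\bot\oplus\mathcal{D}^T\oplus\mathcal{D}^\theta$ (or of $T^\bot M$) each intermediate vector belongs to, so that (\ref{e3}) is applied in the correct direction (base versus fibre) and every pairing of vectors from two distinct distributions is discarded either by orthogonality or by normality of $h$. Beyond the combined use of (\ref{e3}), (\ref{e4})--(\ref{e6}), (\ref{e9}), the $F$-invariance of $\mathcal{D}^T$, and Lemma 3.3 of \cite{Ta} there is no new idea; once the bookkeeping is settled both formulas drop out of a direct computation.
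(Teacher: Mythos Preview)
Your treatment of (\ref{e32}) is correct and is exactly the paper's argument: Gauss formula, parallelism and metric compatibility of $F$, then (\ref{e3}).

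For (\ref{e33}) there is a genuine gap. After the split $NZ=FZ-TZ$ you correctly reach
\[
g(h(X,Y),NZ)=g(\nabla_X(FY),Z)-g(\nabla_XY,TZ),
\]
and the second summand indeed gives $TZ(\ln f)\,g(X,Y)$ via (\ref{e3}). But the first summand does \emph{not} drop out: since $g(FY,Z)=0$ and, by (\ref{e3}), $\nabla_XZ=Z(\ln f)X$, one gets
\[
g(\nabla_X(FY),Z)=-g(FY,\nabla_XZ)=-Z(\ln f)\,g(FX,Y).
\]
In a locally product Riemannian manifold $F$ is $g$-\emph{symmetric} ($g(FX,Y)=g(X,FY)$), not skew as in the K\"ahler case, so this term is generally nonzero. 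Hence your computation actually produces
\[
g(h(X,Y),NZ)=TZ(\ln f)\,g(X,Y)-Z(\ln f)\,g(FX,Y),
\]
which is consistent with (\ref{e18}) combined with (\ref{e3}), but is not (\ref{e33}) as stated. The paper does not give an independent argument here; it simply invokes Lemma~3.1-(ii) of \cite{Al} ``by linearity''. So whatever eliminates the extra term $-Z(\ln f)\,g(FX,Y)$ must come from that reference, and your sketch does not supply it. You should either explain why this term vanishes in the present setting or trace through \cite{Al} to see what is really being asserted.
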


\begin{proof}
Using (\ref{e5}) and (\ref{e4}), we have\\
$g(h(X,FY),FV)=g(\overline{\nabla}_{X}FY,FV)=g(\overline{\nabla}_{X}Y,V)=g(\nabla_{X}Y,V)=-g(\nabla_{X}V,Y)$
for any  $X,Y\!\!\in\!\!\mathcal{D}^T$ and
$V\!\!\in\!\!\mathcal{D}^\bot$. Hence, using (\ref{e3}), we get
easily (\ref{e32}). Last assertion (\ref{e33}) follows from Lemma
3.1-(ii) of \cite{Al} by using linearity.
\end{proof}
\begin{theorem}
Let $M=M_{1}\times_{f}M_{T}$ be an $(q+m)$-dimensional warped
product proper skew semi-invariant submanifold of order $1$ of a
l.p.R. manifold $\bar{M}$ of dimension $2q+m,$  where $dim(M_{1})=q$
and $dim(M_{T})=m.$ Then the invariant distribution $\mathcal{D}^T$
of $M_{T}$ is integrable.
\end{theorem}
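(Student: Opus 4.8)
The plan is to show that for any $X,Y\in\mathcal{D}^T$ the bracket $[X,Y]$ has no component in $\mathcal{D}^\bot$ and no component in $\mathcal{D}^\theta$; since $\mathcal{D}^T$ is $F$-invariant and already known to carry the invariant part, showing $g([X,Y],V)=0$ for all $V\in\mathcal{D}^\bot$ and $g([X,Y],Z)=0$ for all $Z\in\mathcal{D}^\theta$ will suffice by the decomposition (\ref{e23}). Because $[X,Y]=\nabla_XY-\nabla_YX$, it is enough to prove that $g(\nabla_XY,V)$ is symmetric in $X,Y$ and that $g(\nabla_XY,Z)$ is symmetric in $X,Y$.

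First I would handle the $\mathcal{D}^\bot$-component. From (\ref{e19}) we have $g(\nabla_XY,V)=g(A_{FV}X,FY)=g(h(X,FY),FV)$, and Lemma~5.3, equation (\ref{e32}), gives $g(h(X,FY),FV)=-V(\ln f)g(X,Y)$, which is manifestly symmetric in $X$ and $Y$. Hence $g([X,Y],V)=0$ for every $V\in\mathcal{D}^\bot$. Next, for the $\mathcal{D}^\theta$-component, I would start from (\ref{e18}): $g(\nabla_XY,Z)=\csc^2\!\theta\{g(A_{NTZ}X,Y)+g(A_{NZ}X,FY)\}=\csc^2\!\theta\{g(h(X,Y),NTZ)+g(h(X,FY),NZ)\}$. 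Applying Lemma~5.3, equation (\ref{e33}), to each term — using that $TZ\in\mathcal{D}^\theta$ (Lemma 3.3 of \cite{Ta}), so $T(TZ)=T^2Z=\cos^2\!\theta\, Z$ by (\ref{e12}) — gives $g(h(X,Y),NTZ)=T(TZ)(\ln f)\,g(X,Y)=\cos^2\!\theta\, Z(\ln f)g(X,Y)$ and $g(h(X,FY),NZ)=TZ(\ln f)\,g(X,FY)=TZ(\ln f)\,g(FX,Y)$. Summing, $g(\nabla_XY,Z)=\csc^2\!\theta\{\cos^2\!\theta\, Z(\ln f)g(X,Y)+TZ(\ln f)g(FX,Y)\}$; since $g(X,Y)$ and $g(FX,Y)$ are both symmetric under $X\leftrightarrow Y$ (the metric is symmetric and $F$ is self-adjoint), this expression is symmetric in $X,Y$, so $g([X,Y],Z)=0$.

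Combining the two vanishing statements with (\ref{e23}) shows $[X,Y]\in\mathcal{D}^T$, i.e. $\mathcal{D}^T$ is involutive, hence integrable. The only delicate point I anticipate is the bookkeeping in the $\mathcal{D}^\theta$-case: one must be careful to invoke (\ref{e33}) with the correct argument ($TZ$ in place of $Z$ in the first term) and to keep track of $T^2Z=\cos^2\!\theta\,Z$ rather than $Z$, since a sign or constant slip there would break the symmetry. Apart from that, the argument is a direct chain of substitutions into the already-established Lemmas 5.2 and 5.3 together with the slant identities (\ref{e12}) and (\ref{e13}).
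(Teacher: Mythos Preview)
Your argument is correct but follows a genuinely different route from the paper. The paper does not compute $g([X,Y],V)$ and $g([X,Y],Z)$ directly; instead it uses (\ref{e32}) and (\ref{e33}) together with $g(X,FY)=g(FX,Y)$ to show that $g(h(X,FY),FV)=g(h(FX,Y),FV)$ and $g(h(X,FY),NZ)=g(h(FX,Y),NZ)$, then invokes the dimensional hypothesis $\dim\bar{M}=2q+m$ to conclude that $T^{\bot}M=F\mathcal{D}^{\bot}\oplus N\mathcal{D}^{\theta}$, hence $h(X,FY)=h(FX,Y)$, and finally quotes Theorem~1 of \cite{Bej} to obtain integrability of $\mathcal{D}^{T}$.

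Your approach is more elementary and more self-contained: it works entirely inside $TM$ via the decomposition (\ref{e23}), avoids the external reference to Bejancu's theorem, and---notably---never uses the hypothesis $\dim\bar{M}=2q+m$. So you have in fact proved a slightly stronger statement than the one in the theorem. The paper's approach, on the other hand, yields as a by-product the normal-bundle identity $h(X,FY)=h(FX,Y)$, which is of independent interest and fits the pattern of results in \cite{Bej}; but that identity genuinely requires the codimension restriction, which is why the paper states it.
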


\begin{proof}
For any $X,Y\in\mathcal{D}^T$, $Z\in\mathcal{D}^\theta$  and
$V\in\mathcal{D}^\bot$, using  (\ref{e32}) and (\ref{e33}), we get
$g(h(X,FY),FV)=g(h(FX,Y),FV)$ and $g(h(X,FY),NZ)=g(h(FX,Y),NZ)$,
since $g(X,FY)=g(FX,Y)$. Hence, we conclude that $h(X,FY)=h(FX,Y)$,
since $T^{\bot}M=F\mathcal{D}^\bot\oplus N\mathcal{D}^\theta,$ where
$T^{\bot}M$ is the normal bundle of $M$ in  $\bar{M}$. Thus, our
assertion immediately comes from Theorem 1 of \cite{Bej}.
\end{proof}

Let $M$ be a $(k+n+m)$-dimensional warped product proper skew
semi-invariant submanifold of order $1$ of a $(2k+2n+m)$-dimensional
l.p.R. manifold $\bar{M}.$ We choose a canonical orthonormal basis
$\{e_{1},...,e_{m},\bar{e}_{1},...,\bar{e}_{k},\tilde{e}_{1},...,\tilde{e}_{n},e_{1}^{*},...,e_{k}^{*},F\tilde{e}_{1},...,F\tilde{e}_{n}\}$
such that $\{e_{1},...,e_{m}\}$ is an orthonormal basis of
$\mathcal{D}^T$, $\{\bar{e}_{1},...,\bar{e}_{k}\}$ is an orthonormal
basis of  $\mathcal{D}^\theta$,
$\{\tilde{e}_{1},...,\tilde{e}_{n}\}$ is an orthonormal basis of
$\mathcal{D}^\bot$, $\{e_{1}^{*},...,e_{k}^{*}\}$ is an orthonormal
basis of $N\mathcal{D}^\theta$ and
$\{F\tilde{e}_{1},...,F\tilde{e}_{n}\}$ is an orthonormal basis of
$F\mathcal{D}^\bot$.
\begin{remark} In view of (\ref{e4}), we can observe that $\{Fe_{1},...,Fe_{m}\}$ is
also an orthonormal basis of $\mathcal{D}^T$. On the other hand,
with the help of the equations (3.5) and (3.6) of \cite{Ta}, we can
see that $\{\sec\!\theta T\bar{e}_{1},...,\sec\!\theta
T\bar{e}_{k}\}$ is also an orthonormal basis of $\mathcal{D}^\theta$
and $\{\csc\!\theta N\bar{e}_{1},...,\csc\!\theta N\bar{e}_{k}\}$ is
also an orthonormal basis of $N\mathcal{D}^\theta$.
\end{remark}

We now state the main result of this section.

\begin{theorem}
Let $M=M_{1}\times_{f}M_{T}$ be a  $(k+n+m)$-dimensional warped
product proper skew semi-invariant submanifold of order $1$ of a
$(2k+2n+m)$-dimensional l.p.R. manifold $\bar{M}.$ Then the squared
norm of the second fundamental form of $M$ satifies
\begin{equation}\label{e34}
\begin{array}{c}
\|h\|^{2}\geq
m\{\|\nabla^{\bot}(\ln\!f)\|^{2}+\cot^{2}\!\theta\|\nabla^{\theta}(\ln\!f)\|^{2}\}
\end{array},
\end{equation}
where $m=dim(M_{T})$, $\nabla^{\bot}(\ln\!f)$ and
$\nabla^{\theta}(\ln\!f)$ are gradients of $\ln\!f$ on
$\mathcal{D}^\bot$
 and $\mathcal{D}^\theta,$ respectively. If the equality case of (\ref{e34}) holds, then $M_{1}$ is a
 totally geodesic submanifold of $\bar{M}$ and $M$ is mixed totally geodesic.
 Moreover, $M_{T}$ can not be minimal.
\end{theorem}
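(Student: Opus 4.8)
The plan is to compute $\|h\|^2$ against the canonical orthonormal basis described above, decomposing the normal bundle as $T^\bot M = F\mathcal{D}^\bot \oplus N\mathcal{D}^\theta$ and the tangent bundle as $TM = \mathcal{D}^T \oplus \mathcal{D}^\bot \oplus \mathcal{D}^\theta$. Write $\|h\|^2 = \sum_{a,b} g(h(E_a,E_b), h(E_a,E_b))$ over all tangent basis vectors $E_a,E_b$, and throw away all terms except the cross terms $g(h(e_i, V), \cdot)$ and $g(h(e_i, \bar e_j), \cdot)$ with $e_i \in \mathcal{D}^T$; since every term in $\|h\|^2$ is nonnegative, this only decreases the sum, which is exactly what we want for the inequality. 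So the key is to evaluate the relevant components of $h$ on pairs with one leg in $\mathcal{D}^T$.

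The main computational step uses Lemmas 5.6 and 5.7. From \eqref{e32}, for $X,Y\in\mathcal{D}^T$ and $V\in\mathcal{D}^\bot$ we have $g(h(X,FY),FV) = -V(\ln f) g(X,Y)$; replacing $Y$ by $FY$ and using $F^2=I$, this gives $g(h(e_i,e_j),F\tilde e_r) = -\tilde e_r(\ln f)\,\delta_{ij}$ (after also using $h(X,FY)=h(FX,Y)$, which holds by Theorem 5.4, or more simply by symmetrizing). Hence $\sum_{i=1}^m \sum_{r=1}^n g(h(e_i,e_i),F\tilde e_r)^2 = m \sum_r (\tilde e_r(\ln f))^2 = m\|\nabla^\bot(\ln f)\|^2$. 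Similarly \eqref{e33} gives $g(h(X,Y),NZ) = TZ(\ln f) g(X,Y)$; taking $Z = \bar e_j$ and using the orthonormal basis $\{\csc\theta\, N\bar e_j\}$ of $N\mathcal{D}^\theta$ from Remark 5.5, we get $g(h(e_i,e_i), \csc\theta\, N\bar e_j) = \csc\theta \cdot T\bar e_j(\ln f) = \csc\theta\cos\theta\cdot(\sec\theta\, T\bar e_j)(\ln f)$, and since $\{\sec\theta\, T\bar e_j\}$ is an orthonormal basis of $\mathcal{D}^\theta$, summing the squares yields $m\cot^2\theta \|\nabla^\theta(\ln f)\|^2$. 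Adding the two contributions and discarding everything else gives \eqref{e34}.

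For the equality discussion: equality forces every discarded term of $\|h\|^2$ to vanish. In particular $h(\mathcal{D}^T,\mathcal{D}^\theta)=0$ and $h(\mathcal{D}^T,\mathcal{D}^\bot)=0$ and all $N\mathcal{D}^\theta$- and $F\mathcal{D}^\bot$-components of $h$ on $\mathcal{D}^\bot\times\mathcal{D}^\bot$, $\mathcal{D}^\theta\times\mathcal{D}^\theta$, $\mathcal{D}^\bot\times\mathcal{D}^\theta$ must vanish; combined with Lemmas 3.3--3.5 (rewriting $g(\nabla_\bullet \bullet, \bullet)$ for vectors inside $M_1$ in terms of the shape operators $A_{FV}, A_{NZ}, A_{NTZ}$), this shows $M_1$ is totally geodesic in $M$, and since $M_1\times_f M_T$ has $M_1$ totally geodesic in $M$ and $M_T$ totally umbilical, $M_1$ is also totally geodesic in $\bar M$. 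The vanishing of $h(\mathcal{D}^\theta,\mathcal{D}^T)$ is precisely mixed total geodesy. Finally, $M_T$ cannot be minimal: if it were, then by \eqref{e29}-type reasoning the mean curvature vector $-\{\mathrm{grad}^\bot\sigma + \mathrm{grad}^\theta\sigma\}$ (with $\sigma=\ln f$) would be zero, forcing $\ln f$ constant and hence $M$ a trivial warped product, contradicting properness of the warped product structure. I expect the main obstacle to be bookkeeping: correctly identifying which components of $h$ survive after the orthonormal-basis rescalings of Remark 5.5 and making sure the $\sec\theta/\csc\theta$ factors combine into $\cot^2\theta$ rather than some other power, and then carefully tracking the equality conditions back through Lemmas 3.3--3.5 to conclude $M_1$ is totally geodesic.
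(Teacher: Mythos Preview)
Your computation is essentially the paper's, but your verbal description contradicts what you actually do. You say you ``throw away all terms except the cross terms $g(h(e_i,V),\cdot)$ and $g(h(e_i,\bar e_j),\cdot)$,'' i.e.\ the mixed blocks $h(\mathcal{D}^T,\mathcal{D}^\bot)$ and $h(\mathcal{D}^T,\mathcal{D}^\theta)$. But those mixed blocks give nothing: by Lemma~5.1 one has $g(h(X,V),FW)=0$ and $g(h(X,V),NZ)=0$, so $h(\mathcal{D}^T,\mathcal{D}^\bot)$ contributes zero, and there is no control on $h(\mathcal{D}^T,\mathcal{D}^\theta)$. What you \emph{actually} compute (and what the paper keeps) is the diagonal block $h(\mathcal{D}^T,\mathcal{D}^T)$, evaluated via \eqref{e32} and \eqref{e33}. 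Fix the description to match the computation; once that is done, your derivation of \eqref{e34} (including the $\sec\theta/\csc\theta$ bookkeeping landing on $\cot^2\theta$) agrees with the paper.

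On the equality case you overcomplicate things. You do not need Lemmas~3.3--3.5 to show $M_1$ is totally geodesic in $M$: that is automatic for the first factor of any warped product. The paper's argument is simply that equality forces $h(\mathcal{D}^\bot,\mathcal{D}^\bot)=h(\mathcal{D}^\theta,\mathcal{D}^\theta)=h(\mathcal{D}^\bot,\mathcal{D}^\theta)=0$, and since $M_1$ is already totally geodesic in $M$, these three conditions immediately make $M_1$ totally geodesic in $\bar M$. The vanishing of $h(\mathcal{D}^T,\mathcal{D}^\bot)$ and $h(\mathcal{D}^T,\mathcal{D}^\theta)$ is mixed total geodesy, as you say. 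For the non-minimality of $M_T$, the paper argues via \eqref{e32}--\eqref{e33}: tracing over the $\mathcal{D}^T$ basis gives $\sum_i g(h(e_i,e_i),FV)=-mV(\ln f)$ and $\sum_i g(h(e_i,e_i),NZ)=m\,TZ(\ln f)$, so minimality forces $\nabla(\ln f)=0$. Your alternative via the mean curvature $-(\mathrm{grad}^\bot\sigma+\mathrm{grad}^\theta\sigma)$ of $M_T$ in $M$ is also valid and arguably cleaner.
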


\begin{proof} In view of decomposition (\ref{e23}), the squared norm of the
second fundamental form $h$ can be decomposed as
\begin{equation}\label{}\nonumber
\begin{array}{c} \|h\|^{2}=\|h(\mathcal{D}^T,\mathcal{D}^T)\|^{2}+\|h(\mathcal{D}^\theta,\mathcal{D}^{\theta})\|^{2}
+\|h(\mathcal{D}^\bot,\mathcal{D}^\bot)\|^{2}\\
\qquad\qquad+2\|h(\mathcal{D}^T,\mathcal{D}^\bot)\|^{2}+2\|h(\mathcal{D}^T,\mathcal{D}^{\theta})\|^{2}+2\|h(\mathcal{D}^\bot,\mathcal{D}^{\theta})\|^{2}.
\end{array}
\end{equation}
Which can be written as follows:
\begin{equation}\label{e35}
\begin{array}{c}
\|h\|^{2}=\displaystyle\sum^{m}_{i,j=1}\displaystyle\sum^{n}_{a=1}g(h(e_{i},e_{j}),F\tilde{e}_{a})^{2}+
\displaystyle\sum^{m}_{i,j=1}\displaystyle\sum^{k}_{r=1}g(h(e_{i},e_{j}),e_{r}^{*})^{2}\\
\quad+\displaystyle\sum^{n}_{a,b,c=1}\!\!\!g(h(\tilde{e}_{a},\tilde{e}_{b}),F\tilde{e}_{c})^{2}+
\displaystyle\sum^{n}_{a,b=1}\displaystyle\sum^{k}_{r=1}g(h(\tilde{e}_{a},\tilde{e}_{b}),e_{r}^{*})^{2}\\
\qquad+\displaystyle\sum^{k}_{r,s=1}\displaystyle\sum^{n}_{a=1}g(h(\bar{e}_{r},\bar{e}_{s}),F\tilde{e}_{a})^{2}+
\displaystyle\sum^{k}_{r,s,q=1}g(h(\bar{e}_{r},\bar{e}_{s}),e_{q}^{*})^{2}\\
\qquad\qquad\quad+2\displaystyle\sum^{m}_{i=1}\displaystyle\sum^{n}_{a,b=1}g(h(e_{i},\tilde{e}_{a}),F\tilde{e}_{b})^{2}+
2\displaystyle\sum^{m}_{i=1}\displaystyle\sum^{n}_{a=1}\displaystyle\sum^{k}_{r=1}g(h(e_{i},\tilde{e}_{a}),e_{r}^{*})^{2}\\
\qquad\qquad\quad+2\displaystyle\sum^{m}_{i=1}\displaystyle\sum^{k}_{r=1}\displaystyle\sum^{n}_{a=1}g(h(e_{i},\bar{e}_{r}),F\tilde{e}_{a})^{2}+
2\displaystyle\sum^{m}_{i=1}\displaystyle\sum^{k}_{r,s=1}g(h(e_{i},\bar{e}_{r}),e_{s}^{*})^{2}\\
\qquad\qquad+2\displaystyle\sum^{k}_{r=1}\displaystyle\sum^{n}_{a,b=1}g(h(\bar{e}_{r},\tilde{e}_{a}),F\tilde{e}_{b})^{2}+
2\displaystyle\sum^{k}_{r,s=1}\displaystyle\sum^{n}_{a=1}g(h(\bar{e}_{r},\tilde{e}_{a}),e_{s}^{*})^{2}.
\end{array}
\end{equation}
Here, using (\ref{e30})-(\ref{e32}) and Remark 5.4, we have
\begin{equation}\label{e36}
\begin{array}{c}\displaystyle\sum^{m}_{i,j=1}\displaystyle\sum^{n}_{a=1}g(h(e_{i},e_{j}),F\tilde{e}_{a})^{2}
=\displaystyle\sum^{m}_{i,j=1}\displaystyle\sum^{n}_{a=1}(-\tilde{e}_{a}(\ln
\!f)g(e_{i},e_{j}))^{2}
\end{array}
\end{equation}
and
\begin{equation}\label{e37}
\begin{array}{c}\displaystyle\sum^{m}_{i,j=1}\displaystyle\sum^{k}_{r=1}g(h(e_{i},e_{j}),e_{r}^{*})^{2}
=\displaystyle\sum^{m}_{i,j=1}\displaystyle\sum^{k}_{r=1}g(h(e_{i},e_{j}),N\bar{e}_{r})^{2}\!\csc^{2}\!\theta.
\end{array}
\end{equation}
Also, using (\ref{e33}) from (\ref{e37}), we get
\begin{equation}\label{e38}
\begin{array}{c}
\displaystyle\sum^{m}_{i,j=1}\displaystyle\sum^{k}_{r=1}g(h(e_{i},e_{j}),e_{r}^{*})^{2}
=\displaystyle\sum^{m}_{i,j=1}\displaystyle\sum^{k}_{r=1}(T\bar{e}_{r}(\ln
\!f)g(e_{i},e_{j}))^{2}\!\csc^{2}\!\theta.
\end{array}
\end{equation}
Using (\ref{e36}) and (\ref{e38}) from (\ref{e35}), we get
\begin{equation}\label{e39}
\begin{array}{c}
\|h\|^{2}\geq
m\|\nabla^{\bot}(\ln\!f)\|^{2}+\displaystyle\sum^{m}_{i,j=1}\displaystyle\sum^{k}_{r=1}(T\bar{e}_{r}(\ln
\!f)g(e_{i},e_{j}))^{2}\!\csc^{2}\!\theta
\end{array}.
\end{equation}
In view of Remark 5.4, we replace $\bar{e}_{r}$ by $\sec\!\theta
T\bar{e}_{r}$ in the last term of (\ref{e39}) and using (\ref{e12}),
we have
\begin{equation}\label{e40}
\begin{array}{c}
\displaystyle\sum^{m}_{i,j=1}\displaystyle\sum^{k}_{r=1}(T\bar{e}_{r}(\ln
\!f)g(e_{i},e_{j}))^{2}\!\csc^{2}\!\theta\qquad\qquad\qquad\qquad\qquad\\
\qquad=\!\!\displaystyle\sum^{m}_{i,j=1}\displaystyle\sum^{k}_{r=1}\cos^{4}\!\theta(\bar{e}_{r}(\ln
\!f)g(e_{i},e_{j}))^{2}\!\csc^{2}\!\theta)
=m\cot^{2}\!\theta\|\nabla^{\theta}(\ln\!f)\|^{2}.
\end{array}
\end{equation}
Thus, using (\ref{e40}) in (\ref{e39}), we find (\ref{e34}).\\

Next, if the equality case of (\ref{e34}) holds, then from
(\ref{e35}), we have
\begin{equation}\label{e41}
\begin{array}{c}
h(\mathcal{D}^\bot,\mathcal{D}^\bot)=0, \quad
h(\mathcal{D}^\theta,\mathcal{D}^\theta)=0, \quad
h(\mathcal{D}^\bot,\mathcal{D}^\theta)=0
\end{array}
\end{equation}
and
\begin{equation}\label{e42}
\begin{array}{c}
h(\mathcal{D}^T,\mathcal{D}^\bot)=0,\quad
h(\mathcal{D}^T,\mathcal{D}^\theta)=0.
\end{array}
\end{equation}
Since $M_{1}$ is totally geodesic in $M$, from (\ref{e41}) it
follows that $M_{1}$ is also totally geodesic in $\bar{M}.$ On the
other hand (\ref{e42}) imply that $M$ is mixed totally geodesic.
Finally, if we suppose that  $M$ is minimal, then from (\ref{e32})
and (\ref{e33}), we conclude that $\|\nabla(\ln\!f)\|=0$, which is a
contradiction.
\end{proof}
\begin{remark} Theorem 5.5 coincides with Theorem 4.2 of \cite{S2} in case
$\mathcal{D}^\theta=\{0\}$. In other word, Theorem 5.5 is a
generalization of Theorem 4.2 of \cite{S2}.
\end{remark}

\bibliographystyle{amsplain}

\end{document}